\newenvironment{proof}[1][] {\noindent {\bf Proof#1:} }{\hspace*{\fill}$\square$\medskip\par}
\newtheorem{thm}{Theorem}
\newtheorem{lem}[thm]{Lemma}
\newtheorem{prop}[thm]{Proposition}
\newtheorem{rem}[thm]{Remark}
\def\R{{\mathbb R}}
\def\E{{\mathbb E\,}}
\def\P{{\mathbb P}}
\def\be#1\ee{\begin{equation}#1\end{equation}}
\def\BB{\mathcal B}
\def\crr{{c_{\scriptscriptstyle R}}}
\def\cuu{{c_{\scriptscriptstyle U}}}
\def\ed#1{ {\mathbf 1}_{ \{#1  \}}}             % indicator
\def\eps{\varepsilon}
\def\EUU{{\mathbb E}U^2\,}
\def\EU{{\mathbb E}\,U\,}
\def\ERR{{\mathbb E}R^2\,}
\def\ER{{\mathbb E}R\,}
\def\mul{\mu^{(\ell)}}
\def\mulr{\mu^{(\ell,r)}}
\def\RR{\mathcal R}
\def\ss{{\mathbf s}}
\def\SS{\mathcal S}
\def\tN{{\widetilde  N}}
\def\tpf{{\bar F_R}}
\def\Var{\textrm{Var}\,}
\def\vro{\varrho}
\title{Large Deviations of Telecom Processes}
\author{M.A. Lifshits \and S.E. Nikitin}
\date{July 2021}
\begin{document}

\maketitle

\begin{abstract}
We study large deviation properties of
Telecom processes appearing as limits in a critical regime
of infinite source Poisson models.
\end{abstract}

{\bf AMS Subject Classification:}\
Primary: 60F10. %% Large deviations
Secondary: 60F05, %% Limit theorems
60G57, %% Random measures
60E07. %% Stable distributions

{\bf Keywords:}\ large deviation probabilities, Telecom process,
Poisson random measure, teletraffic, workload.

\section{Introduction: Telecom processes}

\subsection{A service system}

Telecom processes originate from a remarkable work by I. Kaj and M.S. Taqqu
 \cite{KT} who handled the limit behavior of "teletraffic systems" by using
the language of integral representations as a unifying technique. Their
article brightly represents a wave of interest to the subject, see
e.g. \cite{KLNS,Kur,PipT0,RH,Taq02},
%%\cite{Wil,WPT,WT95,WT97},
and the surveys with further references \cite{K02,K05,K06},
to mention just a few.
Simplicity of the dependence mechanism used in the model enables to get a
clear understanding both of long range dependence in one case, and
independent increments, in other cases.

The work of the system represents a collection of \emph{service processes} or
\emph{sessions},
%%if one wants to use 
using telecommunication terminology.
Every process starts at some time $s$, lasts $u$ units of time, and
occupies $r$ \emph{resource} units (synonyms for resource  are
\emph{reward, transmission rate} etc). The amount of occupied resources $r$
remains constant during every service process.
%%, from the start time $s$
%%till its end time $s+u$. We say that a service process is \emph{active}
%%at time $t$ if  $s\le t\le s+u$.

%%Therefore, the system's \emph{instant workload}, or the amount of
%%occupied resources at time $t$, is the sum of occupied resources
%%over processes active at time $t$:
%%\[
%%  W^\circ(t) :=  \sum_j r_j \ed{s_j\le t\le s_j+u_j}.
%%\]
%%e will be mainly interested in the system's \emph{integral} (or \emph{aggregated})
%%\emph{workload},
%%\[
%%    W^*(t):= \int_0^t W^\circ(\tau) d\tau,
%%\]
%%which somehow describes the total amount of resources used by the system
%%on the time interval $[0,t]$ for handling the ensemble of service
%%processes.

%%We assume that starting time, the duration, and the required amount of resources
%%are \emph{random variables}. Moreover, it is natural to assume that the characteristics
%%of different service processes have the same distribution and that they are independent.
%%It is also natural to assume that the system works in a stationary regime, e.g. with the same %%distribution of number of service processes starting at time intervals of equal length.

The formal model of the service system is based on Poisson random
measures and looks as follows. Let
$\RR:=\{(s,u,r)\}= \R \times \R_+\times \R_+$. Every point $(s,u,r)$
corresponds to a possible service process with starting time $s$, duration
$u$, and required resources $r$.

The system is characterized by the following parameters:
\begin{itemize}
\item $\lambda>0$ -- \emph{arrival intensity} of service processes;
%%, i.e. the average
%%number of processes starting during any time interval of unit length;
\item $F_U(du)$ -- the distribution of service duration;
\item $F_R(dr)$ -- the distribution of amount of required resources.
\end{itemize}
One may assume $\P(R>0)=P(U>0)=1$ without loss of generality.

Define on $\RR$ an intensity measure
\[
     \mu(ds,du,dr)= \lambda ds\, F_U(du)\, F_R(dr).
\]
Let  $N$ be a Poisson random measure
with intensity $\mu$.
One may consider the samples of $N$ (sets of triplets $(s,u,r)$, each triplet corresponding to a service process) as variants (sample paths) of the work for the system.

%%Now we are able to express many characteristics of the system as the
%%corresponding Poisson integrals.

%%In particular, 
The {\it instant workload} on the system at time $t$ writes as
\[  
     W^\circ(t)= \int_\RR  r \ed{s\le t\le s+u} d N.
\]
This is essentially  the sum of occupied resources over the processes active at time $t$.
The {\it integral workload} over the interval $[0,t]$ is
\begin{eqnarray*}  %% \label{Wtstar}
   W^*(t)&=& \int_0^t W^\circ(\tau) d\tau
    =  \int_\RR  r \int_0^t \ed{s\le \tau \le s+u} d\tau d N
\\  %%\label{Wtstar2}
    &=&  \int_\RR  r\cdot \Big| [s,s+u]\cap[0,t] \Big|  d N
    :=   \int_\RR  r \ell_t(s,u)  d N.
\end{eqnarray*}
Here $|\cdot|$ stands for the length of an interval, and the kernel
\begin{equation} \label{ell}
     \ell_t(s,u) := \Big| [s,s+u]\cap[0,t] \Big|
\end{equation}
will be often used in the sequel.

Notice that $W^\circ(\cdot)$ is a stationary process and its integral
$ W^*(\cdot)$ is a process with stationary increments.

We suppose that either the variables $R$ and $U$ have finite variance,
or their distributions have regular tails. More precisely, either
\[
   \P(U>u)\sim \frac{\cuu}{ u^{\gamma}}\ ,
   \qquad u\to \infty, \qquad 1<\gamma<2, \ \cuu>0,
\]
or $\EUU< \infty$. In the latter case we formally set $\gamma:=2$.

Analogously, we assume either
\[
   \P(R>r)\sim \frac{\crr}{ r^{\delta}}\ ,
   \qquad r\to \infty, \qquad 1<\delta<2,\ \crr>0,
\]
or $\ERR< \infty$. In the latter case we formally set $\delta:=2$.

%%To summarize, 
The behavior of the service system crucially
depends of the parameters $\gamma, \delta\in (1,2]$.

\subsection{Limit theorems for the workload}
\label{ss:ltw}

\subsubsection{Centered and scaled workload process}
\label{sss:Z}

%%In this section, we will explore the 

The main object of theoretical interest is the behavior
of the integral workload as a process (function of time) observed on
long time intervals. 

In order to obtain a meaningful limit, one must 
%%do the following preliminary operations.

%%\begin{itemize} \item 
scale (contract) the time so that it would run through the
standard time interval,
%%\item 
center the workload process,
%%\item 
and divide it by an appropriate scalar factor.
%%\end{itemize}

We choose $[0,1]$ as a standard time interval. 
%%Since we will study
%%the workload on the long time interval $[0,a]$ with $a\to\infty$, the
%%time scaling can be written as $W^*(at), t\in[0,1]$, so that
%%when $t$ runs through $[0,1]$ the workload argument runs through
%%$[0,a]$.

Centering and scaling by appropriate factor $b$ lead to a
\emph{normalized  integral workload process}
\[
     Z_a(t):=\frac{W^*(at)-\ER\cdot \EU\cdot a \lambda  t}{b}\ ,
     \qquad t\in[0,1], \ b=b(a,\lambda).
\]

\subsubsection{A limit theorem leading to a Telecom process}
\label{sss:Y}

%%In the following we will consider the observation horizon length $a$
%%and arrival intensity $\lambda$ as the variables (at least one of them
%%must tend to infinity, in order to provide us with many observed service processes),
%%while the space scaling factor $b$ depends on
%%these variables and the form of this dependence is determined by the system
%%parameters $\gamma, \delta$. Thus $b$ is just an abbreviation for $b(a,\lambda)$.

It is remarkable that a simple tuning of three parameters  $\lambda, \gamma, \delta$
may lead to different limiting processes for $Z_a$, namely, one can obtain
\begin{itemize}
    \item a Wiener process;
    \item a fractional Brownian motion with index $H\in(1/2,1)$;
    \item a centered L\'evy stable process with positive spectrum;
    \item a stable Telecom process;
    \item a Poisson Telecom process.
\end{itemize}
While the first three processes present a core of the classical theory of stochastic processes, the Telecom processes are almost not studied. In this article we focus on some key properties of the Poisson Telecom process.

For the full panorama of related limit theorems we refer to \cite[Chapter 3]{LifEx} and recall here only one result concerning the Poisson Telecom process (cf. \cite[Theorem 13.16]{LifEx}) related to the case of \emph{critical intensity}
\be \label{critint}
    \frac{\lambda}{a^{\gamma-1}}\to L,  \qquad\qquad  0<L<\infty.
\ee

\begin{thm} \label{t:ltpT}
Assume that $1<\gamma<\delta\le 2$,  $a\to\infty$,
and that critical intensity condition $\eqref{critint}$ holds.
Let $Q:=L \, \cuu\, \gamma$.
Then with scaling $b:=a$ the finite dimensional distributions of the
process $Z_a$  converge to those of the Poisson Telecom process $Y_{Q,\gamma}$
admitting an integral representation
\[
   Y_{Q,\gamma}(t) =\int_\RR r\, \ell_t(s,u) \bar N_{Q,\gamma}(ds,du,dr).
\]
Here $\ell_{t}(s,u)$ is the kernel defined in $(\ref{ell})$ and
$\bar N_{Q,\gamma}$ is a centered Poisson random measure of intensity 
$Q\,\mu_{\gamma}$ where 
\[
   \mu_{\gamma}(ds,du,dr) := \frac{ds \, du} {u^{\gamma+1}}\ F_R(dr).
\]
\end{thm}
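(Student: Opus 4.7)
The plan is to reduce the convergence of finite-dimensional distributions to vague convergence of the intensity measures driving the underlying Poisson stochastic integrals, and then to pass to the limit in the L\'evy--Khintchine formula.

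First, I would rewrite $Z_a(t)$ as a stochastic integral with respect to the compensated measure $\bar N := N-\mu$:
\[
 Z_a(t) = \frac{1}{a}\int_\RR r \, \ell_{at}(s,u) \, \bar N(ds,du,dr).
\]
Using the scaling identity $\ell_{at}(as',au') = a\,\ell_t(s',u')$ and pushing $\bar N$ forward under $(s,u,r)\mapsto(s/a,u/a,r)$, one obtains a centered Poisson integral
\[
 Z_a(t) = \int_\RR r\,\ell_t(s,u)\, \tilde N_a(ds,du,dr)
\]
against a centered Poisson measure $\tilde N_a$ with intensity $\nu_a(ds,du,dr) = a\lambda \, ds \, F_U(a\,du) \, F_R(dr)$. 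A direct calculation combining the critical-intensity assumption with the regularly varying tail $\P(U>u)\sim \cuu u^{-\gamma}$ then gives, for $0<\alpha<\beta$,
\[
 a\lambda\, \P(\alpha < U/a \le \beta) \;\longrightarrow\; L\cuu[\alpha^{-\gamma}-\beta^{-\gamma}] = Q\int_\alpha^\beta \frac{du}{u^{\gamma+1}},
\]
i.e.\ $\nu_a \to Q\mu_\gamma$ vaguely on $\R\times(0,\infty)\times[0,\infty)$.

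For fixed $\theta=(\theta_1,\dots,\theta_n)$ and times $t_1,\dots,t_n$, the joint characteristic function has L\'evy--Khintchine form
\[
 \log \E\exp\Big(i\sum_j \theta_j Z_a(t_j)\Big) = \int_\RR \psi\big(r\, g(s,u)\big) \, \nu_a(ds,du,dr),
\]
where $g(s,u) := \sum_j \theta_j \ell_{t_j}(s,u)$ and $\psi(x) := e^{ix}-1-ix$, and analogously for $Y_{Q,\gamma}$ with $Q\mu_\gamma$ in place of $\nu_a$. It thus suffices to pass to the limit in this integral. I would use the interpolation bound $|\psi(x)| \le C_p |x|^p$ valid for $p\in[1,2]$, choosing $p$ with $\gamma < p < \delta$ (possible precisely because $\gamma<\delta$). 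From $\ell_t(s,u)\le \min(u,t)$ and the support $\{-u\le s\le t_n\}$ one gets $\int_\R |g(s,u)|^p\,ds \lesssim u^p\wedge u$; since $\E R^p<\infty$ and $\int_0^\infty (u^p\wedge u)\, u^{-\gamma-1}\,du<\infty$, the limit integrand is globally integrable against $Q\mu_\gamma \times F_R$. A parallel Karamata-type estimate, changing variables $u'=au$ and splitting on $\{U \le at_n\}$, shows that $\int_\RR |rg|^p\, d\nu_a$ is uniformly bounded in $a$, allowing dominated convergence to be invoked together with the vague convergence of $\nu_a$.

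The main obstacle is this dominated-convergence step: one has to control $\psi(rg)$ simultaneously at small $u$ (where $\mu_\gamma$ has infinite mass), at large $u$ (where the effective $s$-support of $g$ grows like $u$) and at large $r$ (where $F_R$ may be heavy-tailed). The assumption $\gamma<\delta$ is exactly what allows a single exponent $p$ to absorb all three regimes, while the critical scaling $\lambda\sim La^{\gamma-1}$ is precisely what balances the factor $a\lambda$ against the dilation $F_U(a\,du)$ to yield the claimed finite nontrivial limit.
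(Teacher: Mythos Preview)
The paper does not actually prove Theorem~\ref{t:ltpT}; it is quoted as a known result from the literature (specifically \cite[Theorem~13.16]{LifEx}), so there is no in-paper argument to compare your proposal against.

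That said, your sketch follows the standard route used in \cite{KT,LifEx}: rescale $(s,u)\mapsto(s/a,u/a)$ to obtain a centered Poisson integral of $r\,\ell_t(s,u)$ against a measure $\nu_a$ with intensity $a\lambda\,ds\,F_U(a\,du)\,F_R(dr)$, show $\nu_a\to Q\mu_\gamma$ vaguely via the critical-intensity assumption and the tail regularity of $U$, and then pass to the limit in the L\'evy--Khintchine exponent using a moment bound $|\psi(x)|\le C_p|x|^p$ with $\gamma<p<\delta$. The ingredients and the role of the hypothesis $\gamma<\delta$ are identified correctly.

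The one place your sketch is too compressed is the passage to the limit. Vague convergence of $\nu_a$ together with a uniform bound on $\int |rg|^p\,d\nu_a$ is not by itself a dominated-convergence situation, since the measures themselves vary. In a complete proof one separates the $u$-axis into $(0,\eps]$, $[\eps,M]$, and $[M,\infty)$: on the middle piece the integrand is bounded and continuous and vague convergence gives the limit directly, while the endpoints are shown to contribute uniformly $o(1)$ as $\eps\to 0$, $M\to\infty$ using the same Karamata/moment estimates you mention. Making this explicit is the only missing step; the overall strategy is sound.
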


For studies on Poisson Telecom process we refer to \cite{CT,Gai}.
It is well known that the process $(Y_{Q,\gamma}(t))_{t\ge 0}$, is correctly defined if 
$\E (R^\gamma)<\infty$. In accordance with its role in the limit theorem, it has stationary increments. It is, however, not self-similar like other limiting processes in the same model, such as Wiener process, fractional Brownian motion, or strictly stable L\'evy process.

\section{Main results} \label{s:results}

\subsection{A limit theorem for Telecom process } \label{ss:lt}
At large time scales the Poisson Telecom process essentially behaves as a $\gamma$-stable L\'evy
process. This fact is basically known but we present it here for completeness of exposition. The analogy with a stable law will also guide us (to some extent and within a certain range) in the subsequent studies of large deviation probabilities.

\begin{prop} \label{p:lt} We have a weak convergence
\be \label{eq:lt}
 \left( \E( R^\gamma)\, t \right)^{-1/\gamma} \,   Y_{Q,\gamma}(t) \ \Rightarrow \SS_{Q,\gamma}, \qquad \textrm{ as } t\to \infty,
\ee
where $\SS_{Q,\gamma}$ is a centered strictly $\gamma$-stable random variable with positive spectrum,
i.e.
\[
   \E \exp\{ it \SS_{Q,\gamma}\}  =\exp\left\{ Q \int_0^\infty \frac{e^{i t u}-1-i t u }{ u^{\gamma+1}} \, du \right\}
\]
\end{prop}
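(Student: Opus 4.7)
The plan is to prove the weak convergence by showing pointwise convergence of characteristic functions. Writing $A_t := (\E(R^\gamma)\, t)^{-1/\gamma}$ and $\phi(x) := e^{ix} - 1 - ix$, the L\'evy--Khintchine formula for the Poisson stochastic integral yields
\[
\log \E \exp\{i\theta A_t Y_{Q,\gamma}(t)\} = Q \int_0^\infty F_R(dr) \int_0^\infty \frac{du}{u^{\gamma+1}} \int_\R \phi(\theta A_t r\, \ell_t(s,u))\, ds.
\]
It suffices to show this tends to $Q \int_0^\infty \phi(\theta u) u^{-\gamma-1} du$ for every $\theta > 0$; the case $\theta < 0$ follows by complex conjugation.

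The first step would be to exploit the trapezoidal shape of $s \mapsto \ell_t(s,u)$ to evaluate the $s$-integral in closed form: for any measurable $g$ with $g(0)=0$,
\[
\int_\R g(\ell_t(s,u))\, ds = 2 \int_0^{m} g(x)\, dx + |u-t|\, g(m), \qquad m := \min(u, t).
\]
Next, rescaling $u = tv$ and introducing the diverging parameter $\rho := \theta r (\E(R^\gamma))^{-1/\gamma} t^{(\gamma-1)/\gamma}$, followed by the substitution $\eta = \rho v$ in the outer integral, splits the log-characteristic function into two pieces. The piece arising from $2\int_0^m \phi$ turns out to be of order $t^{-(\gamma-1)/\gamma}$ and is therefore asymptotically negligible. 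The piece arising from $|u-t|\, g(m)$ equals
\[
Q \int F_R(dr)\, \frac{(\theta r)^\gamma}{\E(R^\gamma)} \int_0^\infty \frac{|\eta/\rho - 1|\, \phi(\min(\eta, \rho))}{\eta^{\gamma+1}}\, d\eta.
\]
As $t \to \infty$ the inner integral tends to $\int_0^\infty \phi(\eta) \eta^{-\gamma-1} d\eta$, integrating $r^\gamma F_R(dr)$ produces $\E(R^\gamma)$ which cancels its inverse, and the identity $\int_0^\infty \phi(\theta u) u^{-\gamma-1} du = \theta^\gamma \int_0^\infty \phi(\eta) \eta^{-\gamma-1} d\eta$ yields the claimed limit.

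The main obstacle is to justify the passage to the limit under the integral by dominated convergence. On $\{\eta \le \rho\}$ one has $|\eta/\rho - 1| \le 1$, and the two-sided bound $|\phi(\eta)| \le \min(\eta^2/2,\, 2\eta)$ provides an envelope integrable against $d\eta/\eta^{\gamma+1}$ precisely because $1<\gamma<2$. On $\{\eta > \rho\}$ the estimates $|\phi(\rho)| = O(\rho)$ and $\eta/\rho - 1 \le \eta/\rho$ give a contribution of order $\rho^{1-\gamma}$, which vanishes. The subsequent interchange with the $F_R(dr)$-integration is controlled by $\E(R^\gamma)<\infty$, which is already assumed for $Y_{Q,\gamma}$ to be well defined.
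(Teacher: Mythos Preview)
Your argument is correct and complete; the trapezoidal identity, the substitution $\eta=\rho v$, and the dominated-convergence bounds all check out. In particular the first piece really is $O(t^{-(\gamma-1)/\gamma})$ once one observes that, after Fubini, the two $\rho^{-\gamma}$ contributions cancel and one is left with $\tfrac{1}{\gamma}\int_0^\rho \phi(y)y^{-\gamma}\,dy$, which is bounded uniformly in $\rho$; the interchange with $F_R(dr)$ then only needs $\E R^{\gamma-1}<\infty$.

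Your route, however, is quite different from the paper's. The paper first pushes the triple integral forward to a one-dimensional representation $Y_{Q,\gamma}(t)=\int_{\R_+} v\,\tN_{Q,\gamma}(dv)$ against a centered Poisson measure of intensity $Q\mulr_t$, where $\mulr_t$ is the image of $\mu_\gamma$ under $(s,u,r)\mapsto r\ell_t(s,u)$. It then invokes a general criterion for weak convergence of Poisson integrals to a stable law (Corollary~8.5 in \cite{LifEx}), which reduces the problem to two facts about $\mulr_t$: the pointwise tail asymptotics $\mulr_t[\vro,\infty)\sim \tfrac{\E(R^\gamma)}{\gamma}\, t\,\vro^{-\gamma}$ and the uniform bound $\mulr_t[\vro,\infty)\le \tfrac{\E(R^\gamma)}{\gamma(\gamma-1)}\, t\,\vro^{-\gamma}$. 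Both are already established in the preliminaries via the explicit formula for $\mul_t[\ell_0,t]$, so the proof of the proposition is essentially two lines. Your approach trades this external criterion for a direct, self-contained characteristic-function computation; it is more elementary and exposes the mechanism (the plateau term of the trapezoid carries the stable limit, the ramp term is lower order), at the cost of more bookkeeping. The paper's approach is shorter and, more importantly, recycles the very same tail estimates on $\mulr_t$ that drive all the subsequent large-deviation theorems.
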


\subsection{Large deviations} \label{ss:ld}
According to the limit theorem \eqref{p:lt}, large deviation probability is
\[
   \P( Y_{Q,\gamma}(t)\ge \vro),  \quad \textrm{as }  \ \vro=\vro(t)\gg t^{1/\gamma}.
\]
Its behavior  may be different in different zones of $\vro$ and may depend
on the distribution of $R$. We consider the most important cases in the following subsections.

\subsubsection{Moderate large deviations} \label{sss:moderate}

\begin{thm} \label{t:moderate}
Let $\vro=\vro(t)$ be such that $t^{1/\gamma}\ll \vro \ll t$.
Then 
\be \label{eq:moderate}
    \P( Y_{Q,\gamma}(t)\ge \vro) = D \, t\, \vro^{-\gamma} \ (1+o(1)),  
    \qquad \textrm{as } \ t\to\infty,
\ee
where $D:=\tfrac{Q \ \E(R^\gamma)}{\gamma}$.
\end{thm}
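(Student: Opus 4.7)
The strategy is the ``single big jump'' principle from heavy-tailed limit theory: up to negligible terms, the event $\{Y_{Q,\gamma}(t)\ge\vro\}$ coincides with the existence of one Poisson atom $(s,u,r)$ of $\bar N_{Q,\gamma}$ with $r\ell_t(s,u)\ge\vro$. I will (i) compute the intensity of such atoms; (ii) derive a matching lower bound by independence; (iii) close the upper bound by truncation.

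\emph{Intensity computation.} For $v>0$ set
\[
\nu([v,\infty)):=Q\int_\RR\ed{r\ell_t(s,u)>v}\,\mu_\gamma(ds,du,dr).
\]
For fixed $r\ge v/t$ and $u\ge v/r$, the set $\{s:\ell_t(s,u)\ge v/r\}$ is an interval of length $t+u-2v/r$; integrating $(t+u-2v/r)u^{-\gamma-1}$ over $u\in[v/r,\infty)$ yields a principal term $Qtr^\gamma/(\gamma v^\gamma)$ plus a remainder of order $r^{\gamma-1}v^{1-\gamma}$. Integration against $F_R$, using $\E R^\gamma<\infty$ (a consequence of $\gamma<\delta$), gives
\[
\nu([v,\infty))=D\,t\,v^{-\gamma}\bigl(1+O(v/t)\bigr),\qquad v/t\to 0.
\]

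\emph{Lower bound.} Fix $\eta>0$ small and restrict the Poisson measure to $B_\eta:=\{r\ell_t(s,u)\ge(1+\eta)\vro\}$ and to its complement; these are independent. Write $Y_{Q,\gamma}(t)=J+Y'$ accordingly. The compensator absorbed in $J$ equals $\int_{(1+\eta)\vro}^\infty v\,\nu(dv)=O(t\vro^{1-\gamma})=o(\vro)$, and $Y'/t^{1/\gamma}$ still converges to the stable law $\SS_{Q,\gamma}$ of Proposition~\ref{p:lt}, since the removed part of the Poisson intensity is asymptotically negligible. Since $\vro\gg t^{1/\gamma}$, on the event that $B_\eta$ contains exactly one atom and $Y'\ge-\eta\vro/2$ one has $Y_{Q,\gamma}(t)\ge\vro$ for large $\vro$; by independence this event has probability $(1-o(1))\nu(B_\eta)\sim D\,t\,((1+\eta)\vro)^{-\gamma}$. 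Sending $\eta\downarrow 0$ gives $\P(Y_{Q,\gamma}(t)\ge\vro)\ge(1-o(1))Dt\vro^{-\gamma}$.

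\emph{Upper bound and main obstacle.} Decompose $Y_{Q,\gamma}(t)=Y^{>A}+Y_{\le A}$ according to whether $r\ell_t(s,u)$ exceeds $A:=\delta\vro$, where $\delta=\delta(t)\downarrow 0$ slowly (e.g.\ $\delta=(t\vro^{-\gamma})^{1/(4\gamma)}$). For fixed small $\eta>0$,
\[
\P(Y_{Q,\gamma}(t)\ge\vro)\le\P(Y^{>A}\ge(1-\eta)\vro)+\P(Y_{\le A}\ge\eta\vro).
\]
The compensator of $Y^{>A}$ is $O(tA^{1-\gamma})=o(\vro)$ provided $\delta^{\gamma-1}\gg t\vro^{-\gamma}$; conditioning on the number of atoms above $A$, the one-atom case contributes at most $\nu([(1-2\eta)\vro,\infty))\sim(1-2\eta)^{-\gamma}Dt\vro^{-\gamma}$, while the multi-atom case contributes $O(\nu([A,\infty))^2)=O(t^2/(\delta\vro)^{2\gamma})=o(t\vro^{-\gamma})$ provided $\delta^{2\gamma}\gg t\vro^{-\gamma}$. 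For the second probability, $\Var(Y_{\le A})=\int_0^A v^2\nu(dv)=O(tA^{2-\gamma})$, so Chebyshev gives $\P(Y_{\le A}\ge\eta\vro)=O(\delta^{2-\gamma}\eta^{-2})\cdot t\vro^{-\gamma}=o(t\vro^{-\gamma})$ because $\gamma<2$. Sending $\eta\downarrow 0$ closes the bound. The principal obstacle is this triple balancing of thresholds: $A$ must be small enough for the variance estimate on $Y_{\le A}$ to beat $t\vro^{-\gamma}$, yet large enough to control both the compensator of $Y^{>A}$ and the two-big-jump scenario. The strict inequality $\gamma<2$ (automatic from $\gamma<\delta\le2$) is essential here; at the boundary $\gamma=2$ an exponential Cram\'er-type estimate would have to replace Chebyshev.
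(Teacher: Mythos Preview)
Your proof is correct and follows essentially the same single-big-jump strategy as the paper: split the Poisson integral at a level $v_0$ proportional to $\vro$, control the small part by Chebyshev's inequality via the variance bound $\Var\le C\,t\,v_0^{2-\gamma}$, and handle the large part by separating the one-atom event from the $\ge 2$-atom event. The only cosmetic differences are that the paper keeps $h$ and $\delta$ fixed and sends them to zero after taking $t\to\infty$ (rather than choosing a $t$-dependent $\delta(t)\downarrow 0$ as you do), and in the lower bound the paper controls $Y'$ directly by Chebyshev rather than by appealing to the stable limit theorem---the latter works but is heavier than needed.
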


This result should be compared with the limit theorem \eqref{eq:lt} because \eqref{eq:moderate} yields
\begin{eqnarray*}
 &&  \P \left( (\E(R^\gamma)t)^{-1/\gamma} Y_{Q,\gamma}(t)\ge \rho\right) 
   = \P\left( Y_{Q,\gamma}(t)\ge (\E(R^\gamma)t)^{1/\gamma} \rho\right) 
\\
   &\sim& D\, t\, (\E(R^\gamma)t)^{-1} \rho^{-\gamma}
   = \frac {Q}{\gamma}\ \rho^{-\gamma} \sim \P(\SS_{Q,\gamma}\ge \rho),
\end{eqnarray*}
whenever $1\ll \rho\ll t^{- (\gamma-1)/\gamma}$. In other words, the moderate large deviation
probabilities are equivalent to those of the limiting distribution.

Using the terminology of the background service system, moderate deviation is attained by a unique
heavy service process. We will stress this fact later in the proof.

\subsubsection{Intermediate large deviations} \label{sss:intermed}

The following result describes the situation on the upper boundary of
moderate deviations' zone. 

\begin{thm} \label{t:intermed}
Let $\kappa>0$ be such that $\P(R\ge \kappa)>0$ and
\be \label{pkappa0}
    \P(R=\kappa)=0.
\ee
Let $\vro=\vro(t)=\kappa t$. Then 
\[
    \P( Y_{Q,\gamma}(t)\ge \vro) = Q D^{(1)}_I(\kappa) \, t^{-(\gamma-1)} \, (1+o(1)),  
    \qquad \textrm{as } \ t\to\infty,
\]
where 
\[
   D_I^{(1)}(\kappa) := \left( \frac{\kappa^{-\gamma}}{\gamma} \, \E(R^\gamma\ed{R\ge \kappa})
   + \frac{(2-\gamma)\kappa^{1-\gamma}}{(\gamma-1)\gamma}\,   \E(R^{\gamma-1} \ed{R\ge \kappa})
   \right).
\]
\end{thm}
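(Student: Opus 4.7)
The strategy is the classical ``one big jump'' principle: the event $\{Y_{Q,\gamma}(t) \ge \kappa t\}$ is realized, to leading order, by a single Poisson atom whose individual contribution $r\,\ell_t(s,u)$ already exceeds $\kappa t$. Because the per-atom budget $r\,\ell_t \le r\cdot t$ forces any such atom to have $r \ge \kappa$, we already see the origin of both the truncated moments in $D_I^{(1)}(\kappa)$ and the role of the continuity hypothesis $\P(R=\kappa)=0$.

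The first step is to compute, explicitly, the $\mu_\gamma$-measure of the sufficient set $\{(s,u,r) : r\,\ell_t(s,u)\ge \kappa t\}$. A direct geometric inspection of the kernel \eqref{ell} shows that for $0<a\le t$, the Lebesgue measure of $\{s:\ell_t(s,u)\ge a\}$ equals $u+t-2a$ for $u\ge a$ and vanishes otherwise. Integrating against $du/u^{\gamma+1}$ yields
\[
 \iint \mathbf{1}_{\{\ell_t(s,u)\ge a\}}\,\frac{ds\,du}{u^{\gamma+1}}\;=\;\frac{(t-2a)\,a^{-\gamma}}{\gamma}+\frac{a^{1-\gamma}}{\gamma-1}.
\]
Setting $a=\kappa t/r$ and integrating over $r\ge\kappa$ against $F_R(dr)$ produces precisely $t^{1-\gamma}D_I^{(1)}(\kappa)$, identifying the expected number of sufficient atoms as $Q\,D_I^{(1)}(\kappa)\,t^{-(\gamma-1)}$. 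By the Poisson property, the probability of at least one sufficient atom has the same asymptotics; this is the candidate answer.

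To turn the candidate into an equality, fix small $\eps>0$ and split $N$ into the independent restrictions to the heavy set $H_\eps=\{r\,\ell_t\ge(1-\eps)\kappa t\}$ and its complement, writing $Y_{Q,\gamma}(t)=Y_H+Y_L$ with each piece a centered Poisson integral. I would then write the upper bound
\[
 \P(Y_{Q,\gamma}(t)\ge\kappa t)\;\le\;\P(N(H_\eps)\ge 1)+\P(Y_L\ge\kappa t-C_\eps),
\]
with $C_\eps=Q\int r\,\ell_t\,\mathbf{1}_{H_\eps}\,d\mu_\gamma=O(t^{2-\gamma})$ the compensator of $Y_H$. The first summand is bounded via the preceding intensity calculation applied to $(1-\eps)\kappa$ in place of $\kappa$, giving $Q\,t^{1-\gamma}D_I^{(1)}((1-\eps)\kappa)$. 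For the second summand, $Y_L$ is a compensated Poisson integral whose integrand is bounded by $(1-\eps)\kappa t$ and whose variance is of order $t^{3-\gamma}$; applying Bennett's inequality with its full $x\ln x$ term yields an estimate of order $t^{-(\gamma-1)/(1-\eps)}=o(t^{-(\gamma-1)})$. A symmetric argument (restrict to exactly one atom in the set $\{r\,\ell_t\ge(1+\eps)\kappa t\}$ together with $Y_L\ge -\eps\kappa t$) produces a matching lower bound.

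The main technical obstacle is the sharp concentration estimate for $Y_L$ at scale $\kappa t$: since the truncation threshold is itself of order $\kappa t$, standard sub-Gaussian or Bernstein bounds fall short by a constant factor in the exponent, and only the logarithmic correction in Bennett's inequality beats $t^{-(\gamma-1)}$, by the extra factor $t^{-\eps(\gamma-1)/(1-\eps)}$. Letting $\eps\downarrow 0$ then closes the gap, and the assumption $\P(R=\kappa)=0$ is precisely what guarantees the continuity $D_I^{(1)}((1\pm\eps)\kappa)\to D_I^{(1)}(\kappa)$ needed to match the one-sided thresholds; without it the upper and lower asymptotic bounds would differ by the mass $F_R$ puts at $\kappa$.
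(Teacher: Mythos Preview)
Your argument is correct and follows the same ``one big jump'' philosophy as the paper, but the technical execution differs in an interesting way. The paper splits the Poisson integral at a \emph{lower} threshold $v_0 = h\kappa t$ with an auxiliary small parameter $h$, independent of the approximation parameter $\delta$. With this choice the variance of the light part is $O(h^{2-\gamma}t^{3-\gamma})$, so plain Chebyshev already gives $\P(Y^\circ \ge \delta\kappa t)\le (D_2 h^{2-\gamma}/\delta^2\kappa^\gamma)\,t^{-(\gamma-1)}$; this is the \emph{same} power of $t$ as the main term, but with a prefactor that vanishes as $h\to 0$ for fixed $\delta$. One then sends $h\to 0$ first and $\delta\to 0$ afterwards. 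The only additional cost is a third term $\P(N[v_0,\infty)\ge 2)=O(h^{-2\gamma}t^{-2(\gamma-1)})$, which is harmless.

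Your route, by contrast, splits at $(1-\eps)\kappa t$, which forces the light part to carry essentially the full mass; Chebyshev would then give only $O(t^{-(\gamma-1)})$ with a non-vanishing constant, so you are compelled to invoke Bennett's inequality and exploit its $u\ln u$ term to squeeze out the extra factor $t^{-\eps(\gamma-1)/(1-\eps)}$. This works, and has the aesthetic advantage of a single parameter $\eps$, but it trades the paper's elementary second-moment argument for a sharper exponential bound. (A minor sign slip: on $\{N(H_\eps)=0\}$ the centered heavy part equals $-C_\eps$, so the light part must exceed $\kappa t + C_\eps$, not $\kappa t - C_\eps$; since $C_\eps = O(t^{2-\gamma})=o(t)$ this is immaterial.) The role of the hypothesis $\P(R=\kappa)=0$ is identified identically in both proofs.
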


\begin{rem} {\rm
There is a continuity between the moderate and intermediate zones in what concerns the degree of $t$ but the constant in the intermediate case is different. Indeed, by plugging formally $\rho:=\kappa t$ 
into \eqref{eq:moderate} one obtains the asymptotics 
$\tfrac{Q\, \E(R^\gamma)}{\gamma}\, \kappa^{-\gamma}\, t^{-(\gamma-1)}$ which corresponds to the fist term in
the definition of $D_I^{(1)}(\kappa)$. When $\kappa$ goes to zero, the second term in that definition 
is smaller than the first one because
\[
   \kappa\, \E(R^{\gamma-1} \ed{R\ge \kappa}) \le  \E(R^{\gamma} \ed{R\ge \kappa})
   \searrow 0, \qquad \textrm{as } \kappa\to 0.
\]
}\end{rem}

\begin{rem} {\rm
If \eqref{pkappa0} does not hold, the decay order of large deviations
will be the same but the expression for the corresponding constant becomes more involved and less explicit. 
}\end{rem}

\medskip

The attentive reader will notice that Theorem \ref{t:intermed} does not work for large $\kappa$ if the distribution of $R$ is compactly supported. Indeed, in this case the large deviation asymptotics will be different,
as the next result shows. In terms of the service system, it handles the case when the large deviation can be attained by accumulation of 
$n$ heavy service processes but cannot be attained by $(n-1)$ ones.

\begin{thm} \label{t:intermed2}
Let $\kappa>0$. Let $n$ be the positive integer such that $\P(R\ge \tfrac{\kappa}{n})>0$ but
\be \label{choosezeta}
  \P(R\ge \tfrac{\kappa}{n-\zeta})=0
  \qquad \textrm{for some } \zeta\in(0,1). 
\ee 
Assume that
\be \label{pkappa0n}
  \P(R_1+\cdots+R_n=\kappa)=0,
\ee
where $R_1,\dots, R_n$ are independent copies of $R$.

Let $\vro=\vro(t):=\kappa t$.
Then 
\[
    \P( Y_{Q,\gamma}(t)\ge \vro) =  Q^n D^{(n)}_I(\kappa) \, t^{-(\gamma-1)n} \, (1+o(1)),  
    \qquad \textrm{as } \ t\to\infty,
\]
where $D^{(n)}_I(\kappa)$ is some finite positive constant depending on $n,\kappa$ and on the law of $R$.
\end{thm}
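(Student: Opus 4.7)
The argument follows the pattern of Theorem~\ref{t:intermed} but now handles the case where \emph{at least $n$} long heavy service processes must act simultaneously. Fix a small threshold $\xi_0>0$ and decompose $Y_{Q,\gamma}(t)=Y^L(t)+Y^S(t)$, the compensated Poisson integrals of $r\,\ell_t(s,u)$ over $A_L:=\{\ell_t(s,u)\ge\xi_0 t\}$ and $A_S:=\RR\setminus A_L$, respectively. Hypothesis \eqref{choosezeta} forces $R\le \kappa/(n-\zeta)$ almost surely, so any collection of $m\le n-1$ points contributes at most $m\kappa t/(n-\zeta)<\kappa t$; hence the event $\{Y_{Q,\gamma}(t)\ge\kappa t\}$ is achievable only via $\ge n$ ``long'' points, once the short part is shown negligible.

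\textbf{Short part is negligible.} The integrand $r\,\ell_t(s,u)\,\ed{\ell_t<\xi_0 t}$ is bounded by $\frac{\kappa}{n-\zeta}\xi_0 t$, and a direct computation gives $\int\ell_t^{\,2}\,\ed{\ell_t<\xi_0 t}\,d\mu_\gamma=O(\xi_0^{2-\gamma}\,t^{3-\gamma})$. A Bennett-type inequality for compensated Poisson integrals then yields, for any $\eps>0$,
\[
   \P\bigl(|Y^S(t)|\ge\eps t\bigr)\ \le\ t^{-c\eps/\xi_0}
\]
with $c>0$ independent of $\xi_0$. Choosing $\xi_0$ small enough renders this bound $o(t^{-n(\gamma-1)})$. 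The deterministic compensator of $Y^L$ is of order $O(t^{2-\gamma})=o(t)$, so up to negligible events the event $\{Y_{Q,\gamma}(t)\ge\kappa t\}$ coincides with $\{\sum_{\text{long }i} r_i\ell_t(s_i,u_i)\ge\kappa t\}$.

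\textbf{Long-part computation.} Let $M$ count the long points. Since $\E M=\Lambda(t)=Q\,\mu_\gamma(A_L)=O(t^{-(\gamma-1)})$, one has $\P(M\ge n+1)=O(t^{-(n+1)(\gamma-1)})$, a strict lower order. On $\{M=n\}$ the marks, projected via $(\xi_i,r_i):=(\ell_t(s_i,u_i)/t,\,r_i)$, have intensity $Q\,t^{-(\gamma-1)}\,\nu(d\xi,dr)$ with
\[
   \nu(d\xi,dr)\ =\ \bigl[(-g'(\xi))\,\ed{\xi<1}\,d\xi+g(1)\,\delta_1(d\xi)\bigr]\,F_R(dr),
\]
and $g(\xi)=\frac{(\gamma-1)-(\gamma-2)\xi}{\gamma(\gamma-1)\xi^\gamma}$, obtained by computing $\mu_\gamma\{\ell_t\ge t\xi\}=t^{-(\gamma-1)}g(\xi)$ explicitly (this is the same $g$ that recovers the constant of Theorem \ref{t:intermed} when $n=1$). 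The standard Poisson identity for symmetric events then yields
\[
   \P\bigl(Y_{Q,\gamma}(t)\ge\kappa t\bigr)\ =\ \frac{Q^n\,t^{-n(\gamma-1)}}{n!}\,\nu^{\otimes n}\!\bigl(\{\textstyle\sum_i r_i\xi_i\ge\kappa\}\bigr)\,(1+o(1)),
\]
identifying $D^{(n)}_I(\kappa)=\frac{1}{n!}\,\nu^{\otimes n}(\{\sum r_i\xi_i\ge\kappa\})$ after letting $\xi_0\to 0$. Finiteness is automatic: on $\{\sum r_i\xi_i\ge\kappa,\,r_i\le \kappa/(n-\zeta)\}$ each $\xi_i$ must satisfy $\xi_i\ge 1-\zeta$, so the support is bounded away from $\xi=0$ where $-g'$ blows up. Positivity uses $\P(R\ge\kappa/n)>0$ together with the atom $g(1)\delta_1(d\xi)$ of $\nu$. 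Assumption \eqref{pkappa0n} (together with the density in the other coordinates) ensures the boundary $\{\sum r_i\xi_i=\kappa\}$ has null $\nu^{\otimes n}$-measure, matching upper and lower bounds.

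\textbf{Main obstacle.} The technical heart is the sharpness of the short-part estimate. Chebyshev's inequality alone gives only $O(t^{-(\gamma-1)})$, sufficient for $n=1$ (Theorem \ref{t:intermed}) but useless for $n\ge 2$. Bennett's inequality, crucially combined with $\xi_0$ taken sufficiently small (depending on $n$ and $\gamma$), supplies the polynomial decay of arbitrarily high order required to ensure $\P(|Y^S(t)|\ge\eps t)=o(t^{-n(\gamma-1)})$.
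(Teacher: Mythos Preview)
Your proposal is correct and follows essentially the same route as the paper: the paper's Lemma~\ref{l:expest} is precisely the Bennett-type exponential bound you invoke for the short part (with the same mechanism that choosing the cutoff small enough yields any desired polynomial order $t^{-(\gamma-1)m}$), and the paper's split at $v_0=ht$ on the product $r\ell_t$ is equivalent to your split at $\xi_0 t$ on $\ell_t$ alone because $R$ is bounded under \eqref{choosezeta}. Your measure $\nu$ and constant $D_I^{(n)}(\kappa)$ coincide with the paper's (see \eqref{DIn}), and your treatment of finiteness, positivity, and the boundary null set via \eqref{pkappa0n} matches the paper's Lemma~\ref{l:nunzero}.
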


\begin{rem} {\rm 
The explicit form of  $D^{(n)}_I(\kappa)$ is given in equation \eqref{DIn} below.
}
\end{rem}

\begin{rem} {\rm
Theorem \ref{t:intermed2} does not cover a critical case
$\zeta=1$, where we have $\P(R\ge \tfrac{\kappa}{n-1})=0$ but  $\P(R\ge \tfrac{\kappa}{n-1}-\eps)>0$ for all $\eps>0$. In this case, the
assertion of the theorem may not hold because the large deviation probability behavior depends of that of the upper tail
$\P(R\in [\tfrac{\kappa}{n-1}-\eps,\tfrac{\kappa}{n-1}))$, as $\eps\to 0$.
}
\end{rem}

\subsubsection{Ultralarge deviations}  \label{sss:ultraLD}

\begin{thm} \label{t:ultra_regular}
Let $\vro=\vro(t)\gg t$. Assume that the tail probability function $\tpf(y):=\P(R\ge y)$ is regularly varying of
negative order $-m$ where $m>\gamma$.
Then 
\[
    \P( Y_{Q,\gamma}(t)\ge \vro) 
    =  Q\, D \, t^{-(\gamma-1)} \tpf(\vro/t) \,  (1+o(1)),  
    \qquad \textrm{as } \ t\to\infty,
\]
where 
\[
   D:= \frac{m(m-1)}{\gamma(\gamma-1)(m-\gamma+1)(m-\gamma)}.
\]            
\end{thm}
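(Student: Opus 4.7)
The strategy follows the \emph{single big jump} principle: an ultralarge deviation $Y_{Q,\gamma}(t)\ge\vro$ with $\vro\gg t$ is expected to be attained by the presence of one atypically heavy service process $(s,u,r)$ with $r\,\ell_t(s,u)\ge\vro$. Since $\ell_t(s,u)\le t$, such a point forces $r\ge\vro/t$, which is the rare event responsible for the factor $\tpf(\vro/t)$. Set
\[
   A:=\{(s,u,r)\in\RR : r\,\ell_t(s,u)\ge\vro\}.
\]
An elementary geometric computation shows that for $0<\alpha\le t$ one has $|\{s\in\R:\ell_t(s,u)\ge\alpha\}|=(t+u-2\alpha)_+$ (with support $u\ge\alpha$), and hence
\[
   \int\ed{\ell_t(s,u)\ge\alpha}\,\frac{ds\,du}{u^{\gamma+1}}
   =\frac{t}{\gamma\,\alpha^\gamma}+\frac{2-\gamma}{\gamma(\gamma-1)\,\alpha^{\gamma-1}}.
\]
Taking $\alpha=\vro/r$ and integrating against $F_R$ over $r\ge\vro/t$, together with Karamata's theorem in the form $\E[R^\beta\,\ed{R\ge y}]\sim\tfrac{m}{m-\beta}\,y^\beta\,\tpf(y)$ (valid for $\beta<m$) applied at $y=\vro/t$, gives after the algebraic identity $\tfrac{1}{m-\gamma}+\tfrac{2-\gamma}{(\gamma-1)(m-\gamma+1)}=\tfrac{m-1}{(\gamma-1)(m-\gamma)(m-\gamma+1)}$ the asymptotics
\[
   Q\,\mu_\gamma(A)\sim Q\,D\,t^{-(\gamma-1)}\,\tpf(\vro/t).
\]

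\emph{Upper bound.} Decompose $Y_{Q,\gamma}(t)=Y^{A}+Y^{A^c}$ into the two independent centered Poisson integrals over $A$ and $A^c$. Then
\[
   \P(Y_{Q,\gamma}(t)\ge\vro)\le\P(N(A)\ge 1)+\P(Y^{A^c}\ge\vro),
\]
and the first term is at most $Q\mu_\gamma(A)$, which already matches the claimed asymptotics. On $A^c$ every individual jump $r\,\ell_t(s,u)$ is bounded by $\vro$, so a Bernstein-type exponential inequality for the centered Poisson integral shows that $\P(Y^{A^c}\ge\vro)$ is of smaller order than $t^{-(\gamma-1)}\tpf(\vro/t)$.

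\emph{Lower bound.} For a small $\eta>0$ set $A_\eta:=\{(s,u,r):r\,\ell_t(s,u)\ge(1+\eta)\vro\}$. On the event $\{N(A_\eta)=1\}$ the unique big point contributes at least $(1+\eta)\vro$ to $Y_{Q,\gamma}(t)$, so using independence of $Y^{A_\eta}$ and $Y^{A_\eta^c}$,
\[
   \P(Y_{Q,\gamma}(t)\ge\vro)\ge\P(N(A_\eta)=1)\,\P(Y^{A_\eta^c}\ge-\eta\vro).
\]
The second factor tends to $1$ because $Y^{A_\eta^c}$ is centered with fluctuations of smaller order than $\vro$ (by the same truncation estimate used in the upper bound), while $\P(N(A_\eta)=1)\sim Q\mu_\gamma(A_\eta)\sim QD\,t^{-(\gamma-1)}\tpf((1+\eta)\vro/t)$ by the previous step. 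Regular variation of $\tpf$ yields $\tpf((1+\eta)\vro/t)/\tpf(\vro/t)\to(1+\eta)^{-m}$, so letting first $t\to\infty$ and then $\eta\to 0$ closes the matching equivalence.

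The principal technical obstacle is controlling $\P(Y^{A^c}\ge\vro)$ (and the corresponding fluctuations of $Y^{A_\eta^c}$) under the assumption $m>\gamma$ alone, which allows $\E R^2=\infty$. This requires a two-scale truncation of $A^c$ into a moderate-jump part, handled by an exponential (Bernstein) bound, and a small-jump part, controlled by a direct variance estimate; the computation of $\mu_\gamma(A)$ itself is by contrast only a clean application of the geometric identity above and Karamata's theorem.
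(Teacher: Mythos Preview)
Your approach is essentially the same as the paper's: both identify the single-big-jump region $\{r\ell_t(s,u)\ge\vro\}$, compute its $\mu_\gamma$-measure via the explicit formula \eqref{mul} together with Karamata's theorem (this is the paper's \eqref{mulr_asymp_ultra}), and control the complement through an exponential Chebyshev inequality for the truncated Poisson integral plus a Poisson-tail bound on multiple big jumps. The only organisational difference is that the paper places its truncation level at $v_0=h\vro$ with $h=h(\delta)$ small from the start, invoking the ready-made estimate \eqref{expCheb2}; you instead split first at $\vro$ and then concede that a further ``two-scale'' truncation is required---that second cut is exactly the paper's $v_0=h\vro$, after which $\P(N[h\vro,\infty)\ge 2)\le (Q\,\mulr_t[h\vro,\infty))^2$ plays the role of your moderate-jump term. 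Your lower bound also matches the paper's (Section~\ref{ss:LowerLD}), modulo the small omitted check that the centering $Q\int_{A_\eta} r\ell_t\,d\mu_\gamma=o(\vro)$, which the paper handles via \eqref{Ebound}.
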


As in Theorem \ref{t:moderate}, the workload's large deviation is attained by a unique long and heavy service process.

Theorem \ref{t:ultra_regular} deals with the distributions of $R$ having essentially polynomial tails. The corresponding distributions with light tails lead to completely different results such as Poisson large deviations. This direction requires supplementary research to be presented elsewhere. 

\subsection{Concluding remark}

A challenging case when the workloads' ultralarge  deviation is formed via the interaction of infinitely many service processes remains beyond the scope of this article. Here, a large deviation rate function related to the distribution of $R$ must play a major role and the results in the spirit of classical large deviation theory \cite{DZ} are expected. This might be a subject of a subsequent work.

\section{Proofs} \label{s:proofs}

\subsection{Preliminaries}

Let us introduce two auxiliary intensity measures.
The first one is the "distribution" of the kernel  $\ell_t$, namely
\[
   \mul_t (A):= \int_{\R} \int_{\R_+} \ed{\ell_t(s,u)\in A} \frac{du}{u^{\gamma+1}}\, ds,
   \qquad A\in\BB([0,t]).
\]
The second is the "distribution" of the product $r\ell_t(s,u)$,
\begin{eqnarray*}
\mulr_t (A)&:=&  \int_{\R_+} \int_{\R_+} \ed{ r \ell \in A} \mul_t(d\ell) F_R(dr)
\\
 &=& \mu\{(s,u,r): r \ell_t(s,u)\in A\},   
\qquad A\in\BB(\R_+).
\end{eqnarray*}
A simple variable change in the definition of $Y_{Q,\gamma}(t)$ yields
\be \label{TeleProc2}
    Y_{Q,\gamma}(t) = \int_{\R_+} v \tN_{Q,\gamma}(dv) 
\ee
where $\tN_{Q,\gamma}$ is a centered Poisson measure with intensity $Q\mulr_t$. Therefore,
the properties  of $\mulr_t$ determine those of $Y_{Q,\gamma}(t)$.

As a first step, we give an explicit formula for the intermediate measure  $\mul_t$.
First, by definition we have $ \mul_t(t,\infty) =0$. Next, let us fix an $\ell_0\in (0,t]$ and find $\mul_t[\ell_0,t]$. In fact,
$\ell_t(s,u)\ge \ell_0$ iff $u\ge \ell_0$ and 
$s\in[\ell_0-u,t-\ell_0]$. Therefore,
\be \label{mul}
   \mul_t[\ell_0,t] =\int_{\ell_0}^\infty (t-2\ell_0+u) \frac{du}{u^{\gamma+1}}
   = \frac{t\,\ell_0^{-\gamma}}{\gamma} +\frac{2-\gamma}{(\gamma-1)\gamma} \, \ell_0^{1-\gamma}.
\ee
It follows that the measure $\mul_t$ has a weight $\frac{t^{-(\gamma-1)}}{(\gamma-1)\gamma}$
at the right boundary point $t$ and a density
\[
 \frac{d\mul_t}{d\ell} (\ell) = t \, \ell^{-1-\gamma} +\frac{2-\gamma}{\gamma}\, \ell^{-\gamma},
\qquad 0<\ell<t.
\]
 For each $\ell_0>0$, formula \eqref{mul} also yields a bound
\be \label{mul_b}
   \mul_t[\ell_0,\infty) =   \mul_t[\ell_0,t] \le
    \frac{t\ell_0^{-\gamma}}{\gamma} \left( 1 +\frac{2-\gamma}{\gamma-1}\right)
    =  \frac{t\,\ell_0^{-\gamma}}{\gamma(\gamma-1)}.
\ee

Finally, consider the asymptotic behavior of
\be  \label{mulr_intrepr}
   \mulr_t[\vro,\infty)  = \int_{\R_+} \mul_t\left[ \frac{\vro}{r} ,t\right] F_R(dr).
\ee
Assume that $\vro\to\infty$ but $\vro/t\to 0$.
Then it follows from \eqref{mul} that for every fixed $r$
\be \label{conv_r}
   \mul_t\left[ \frac{\vro}{r} ,t\right] 
   = \frac{t\, \vro^{-\gamma} r^\gamma}{\gamma} (1+o(1)).
\ee
By using \eqref{mul_b}, we also have an integrable majorant w.r.t. the law $F_R$:
\[
    \mul_t\left[ \frac{\vro}{r} ,t\right]  
    \le  \frac{ t\, \vro^{-\gamma} r^\gamma} {\gamma(\gamma-1)}.
\]
By integrating this estimate in \eqref{mulr_intrepr} we obtain
\be \label{mulr_b}
   \mulr_t[\vro,\infty) \le \frac{\E(R^\gamma)} {\gamma(\gamma-1)} \, t\, \vro^{-\gamma}.
\ee
Furthermore, by Lebesgue's majorated convergence theorem \eqref{mulr_intrepr} and \eqref{conv_r} yield
\be \label{mulr_asymp}
  \mulr_t [\vro,\infty) 
   =  \vro^{-\gamma} \,  t \int_{\R_{+}} \frac{r^\gamma}{\gamma} F_R(dr)\,  (1+o(1)) 
   =   \frac{\E(R^\gamma)}{\gamma}\, t\, \vro^{-\gamma} \, (1+o(1)).
\ee

\subsection{Proof of Proposition \ref{p:lt} \label{ss:proof_prop_lt}}

Consider the integral representation \eqref{TeleProc2}. According to a general criterion of the weak
convergence of Poisson integrals to a stable law \cite[Corollary 8.5]{LifEx},
it is enough to check that for each fixed $\rho>0$
\be \label{conv}
  Q\ \mulr_t \{ v: ( \E(R^\gamma) \, t)^{-1/\gamma} v \ge\rho\} 
  = Q\ \frac{\rho^{-\gamma}}{\gamma} \  (1+o(1)) 
\ee
combined with the uniform bound
\be \label{conv_u}
      \sup_{t>0} \sup_{\rho>0} \rho^\gamma \ \mulr_t \{v: (\E(R^\gamma)\, t)^{-1/\gamma} v \ge\rho\} <\infty.  
\ee
Indeed, by substituting $\vro=\rho ( \E(R^\gamma) t)^{1/\gamma}$ in \eqref{mulr_asymp} we obtain \eqref{conv}
and by making the same substitution in \eqref{mulr_b} we obtain \eqref{conv_u}.
\hfill $\Box$

\subsection{A decomposition} 
Take some $v_0>0$ and split the integral representation \eqref{TeleProc2} into three parts:
\begin{eqnarray} \nonumber
    Y_{Q,\gamma}(t) &=& \int_{0}^{v_0} v \tN_{Q,\gamma}(dv)
            + \int_{v_0}^\infty v N_{Q,\gamma}(dv) 
            - Q \int_{v_0}^\infty v \mulr_t(dv)
\\  \label{split}
   &:=& Y^\circ(t) + Y^{\dag}(t)-E_t,
\end{eqnarray} 
where $N$ is the corresponding {\it non-centered} Poisson random measure and $E_t$ is the centering
deterministic function.

The variance of $ Y^\circ(t)$ admits an upper bound
\begin{eqnarray*}
   \Var  Y^\circ(t)  &=& Q  \int_{0}^{v_0} v^2 \mulr_t(dv) 
   = 2 Q  \int_{0}^{v_0} v\ \mulr_t[v,v_0]\, dv
\\
   &\le& 2 Q  \int_{0}^{v_0} v \ \mulr_t[v,\infty) \, dv.
\end{eqnarray*}
Using \eqref{mulr_b} we get
\be \label{varb}
   \Var  Y^\circ(t) \le \frac{2 \, Q \, t}{\gamma(\gamma-1)} \E(R^\gamma) \int_0^{v_0} v^{1-\gamma}\, dv
   = D_2 \, t \, v_0^{2-\gamma},
\ee
where $D_2:=  \tfrac{2Q}{\gamma(\gamma-1)(2-\gamma)}\, \E(R^\gamma)$.
\medskip

Similarly, the centering term admits a bound
\be \label{Ebound}
   0 \le E_t   \le Q \int_{v_0}^\infty  \mulr_t[v,\infty) \, dv 
   + Q \, v_0 \, \mulr_t[v_0, \infty) %new
  \le D_1 \, t\, v_0^{1-\gamma},
\ee
%where $D_1:=  \tfrac{Q}{\gamma(\gamma-1)^2} \, \E(R^\gamma)$.
where $D_1:=  \tfrac{Q}{(\gamma-1)^2} \, \E(R^\gamma)$. %new

\subsection{A lower bound for large deviations}
\label{ss:LowerLD}
We will give a lower bound for large deviation probabilities $\P( Y_{Q,\gamma}(t)\ge \vro)$ 
with $\vro=\vro(t)\gg t^{1/\gamma}$.
Let $h,\delta$ be small positive numbers. Define $v_0:= h\vro$ and consider the corresponding
decomposition \eqref{split}.

First of all, notice that $E_t$ is negligible at the range $\vro$ because by \eqref{Ebound}
we have 
\[
   E_t\le D_1 t (h\vro)^{1-\gamma} =  D_1 h^{1-\gamma}   \left(t^{-1/\gamma}\vro\right)^{-\gamma} \vro 
   = o(\vro).
\]
Therefore, we may and do assume $t$ to be so large that $E_t\le \delta \vro$.

Using \eqref{varb}, by Chebyshev inequality we have
\begin{eqnarray} \nonumber
   \P( |Y^\circ(t)|\ge \delta\vro) &\le& \frac{\Var Y^\circ(t)}{(\delta\vro)^2}
   \le \frac{ D_2 t (h\vro)^{2-\gamma}}{(\delta\vro)^2}
\\   \label{Cheb}
   &=&  \frac{ D_2 h^{2-\gamma}}{\delta^2} \ (t^{-1/\gamma}\vro)^{-\gamma} \to 0.
\end{eqnarray}
It is also useful to notice that for each $\rho>0$ and all large $t$ 
\begin{eqnarray*}
  \mulr_t[v_0,\infty) &=& \mulr_t[h\vro,\infty) = \mulr_t[h (t^{-1/\gamma}\vro) t^{1/\gamma},\infty)
\\
  &\le& \mulr_t[h \rho  t^{1/\gamma},\infty) 
  \le  \frac{\E(R^\gamma)}{\gamma(\gamma-1)}\,  (h \rho)^{-\gamma},
\end{eqnarray*}
where we used \eqref{mulr_b} at the last step. Letting $\rho\to\infty$ we get
$\mulr_t[v_0,\infty) \to 0$.

 Now we may proceed with the required lower bound as follows:
 \begin{eqnarray*}
  && \P( Y_{Q,\gamma}(t)\ge \vro) \ge \P(  |Y^\circ(t)|\le \delta\vro, Y^{\dag}(t)\ge (1+2\delta)\vro)
 \\
  &\ge& \P(  |Y^\circ(t)|\le \delta\vro) \ 
   \P( Y^{\dag}(t)\ge (1+2\delta)\vro; N(v_0,\infty)=1)
 \\
   &=& \P(  |Y^\circ(t)|\le \delta\vro) \,
       \exp\{-Q\mulr_t[v_0,\infty)\} \, Q \,\mulr_t[(1+2\delta)\vro,\infty).
 \end{eqnarray*}
 The idea behind this bound is to take a single service process providing a substantial large deviation
 workload and to suppress other contributions.

As we have just seen, the first two factors tend to one, thus
\be \label{LowerLD}
   \P( Y_{Q,\gamma}(t)\ge \vro) \ge Q \,\mulr_t[(1+2\delta)\vro,\infty) \ (1+o(1)).
\ee

\subsection{An upper bound for large deviations} 
Starting again with representation \eqref{split},  using $E_t\ge 0$ and \eqref{Cheb} we have
 \begin{eqnarray} \nonumber
     && \P( Y_{Q,\gamma}(t)\ge \vro) 
\\ \nonumber
     &\le& \P(  Y^\circ(t)\ge \delta\vro)   + \P(N[v_0,\infty)\ge 2)
            + \P( Y^{\dag}(t)\ge (1-\delta)\vro; N[v_0,\infty)=1)
 \\ \nonumber
     &=& \P(  Y^\circ(t) \ge \delta\vro) + \P(N[v_0,\infty)\ge 2)
           +  \P(N[(1-\delta)\vro,\infty)=1)
\\   \label{UpperLD}
      &\le& \frac{ D_2 t h^{2-\gamma}}{\delta^2\vro^\gamma} 
            + \frac 12 \left( Q\mulr_t[v_0,\infty)\right)^2 +Q\mulr_t[(1-\delta)\vro,\infty).
%%   &=& \P(  |Y^\circ(t)|\ge \delta\vro, Y^{\dag}(t)) \,
%%       \exp\{-Q\mulr_t[v_0,\infty)\} \, Q \,\mulr_t[(1+\delta)\vro,\infty).
 \end{eqnarray}
Here the  last term is the main one. Recall that almost the same expression 
also shows up in the lower bound.

\subsection{Proof of Theorem \ref{t:moderate} \label{ss:proof_thm_moderate}}

Recall that, according to \eqref{mulr_asymp}, in the zone under consideration
$t^{1/\gamma}\ll \vro\ll t$, it is true that
\be \label{mulr_asymp_moder}
  \mulr_t[\vro,\infty) =   \frac{\E(R^\gamma)}{\gamma} \ t\ \vro^{-\gamma} (1+o(1))
\ee
and we have the similar representations with $\vro$ replaced by either $(1+2\delta)\vro$,
$(1-\delta)\vro$, or $v_0=h\vro$. 

In view of \eqref{mulr_asymp_moder}, the lower estimate \eqref{LowerLD} yields
\[
   \liminf_{t\to\infty} \frac{\P( Y_{Q,\gamma}(t)\ge \vro) }{t \vro^{-\gamma}}
   \ge  \frac{Q\ \E(R^\gamma)}{\gamma} (1+2\delta)^{-\gamma},
\]
while the upper estimate \eqref{UpperLD} yields
\[
   \limsup_{t\to\infty} \frac{\P( Y_{Q,\gamma}(t)\ge \vro) }{t \vro^{-\gamma}}
   \le \frac{D_2 h^{2-\gamma}}{\delta^2} + \frac{Q\ \E(R^\gamma)}{\gamma} (1-\delta)^{-\gamma},
\]
because the second term in \eqref{UpperLD} has a lower order of magnitude.

Letting first $h\to 0$, then $\delta\to 0$, we obtain
\[
   \lim_{t\to\infty} \frac{\P( Y_{Q,\gamma}(t)\ge \vro) }{t \vro^{-\gamma}}
   =  \frac{Q\ \E(R^\gamma)}{\gamma}, 
\]
as required.
\hfill $\Box$

\subsection{Proof of Theorem \ref{t:intermed} \label{ss:proof_thm_intermed}}

The proof goes along the same lines as in the moderate deviation case, except for the evaluation of $\mulr_t[\vro,\infty)$. Instead of \eqref{mulr_asymp_moder}, 
we have the following non-asymptotic exact formula. According to \eqref{mul},
for $\rho=\kappa t$ we have
\begin{eqnarray*}
    \mulr_t[\vro,\infty)  &=& \int_0^\infty \mul_t\left[  \frac{\vro}{r}, \infty \right) F_R(dr)
\\
     &=& \int_\kappa^\infty \left(
         t\ \frac{(\vro/r)^{-\gamma}}{\gamma} + \frac{2-\gamma}{(\gamma-1)\gamma}\, (\vro/r)^{1-\gamma}
   \right) F_R(dr)
\\
 &=&  \int_\kappa^\infty \left(
         \frac{\kappa^{-\gamma}}{\gamma} \ r^{\gamma} + \frac{(2-\gamma)\kappa^{1-\gamma} }{(\gamma-1)\gamma}\, r^{\gamma-1}
   \right) F_R(dr) \,  t^{-(\gamma-1)}
\\
 &=&   \left(
         \frac{\kappa^{-\gamma}}{\gamma}\, \E(R^{\gamma}\ed{R\ge\kappa}) 
         + \frac{(2-\gamma)\kappa^{1-\gamma} }{(\gamma-1)\gamma}\, \E(R^{\gamma-1}\ed{R\ge\kappa}) 
   \right) \,  t^{-(\gamma-1)} 
\\
 &=&   D^{(1)}_I(\kappa) \  t^{-(\gamma-1)}. 
\end{eqnarray*}
The latter constant is positive due to assumption $\P(R\ge \kappa)>0$.

For the lower bound, the estimate \eqref{LowerLD} yields
\[
    \P( Y_{Q,\gamma}(t)\ge \kappa t) 
    \ge Q \, D^{(1)}_I((1+2\delta)\kappa)\,  t^{-(\gamma-1)}    \ (1+o(1)).
\]
Letting $\delta\searrow 0$ and using \eqref{pkappa0}, we have
\begin{eqnarray*}
    \lim_{\delta\searrow 0} D^{(1)}_I((1+2\delta)\kappa)
     &=&  \frac{\kappa^{-\gamma}}{\gamma}\, \E(R^{\gamma}\ed{R >\kappa}) 
         + \frac{(2-\gamma)\kappa^{1-\gamma} }{(\gamma-1)\gamma}\, \E(R^{\gamma-1}\ed{R>\kappa})
\\
     &=&   D^{(1)}_I(\kappa).   
\end{eqnarray*}
Therefore,
\[
    \P( Y_{Q,\gamma}(t)\ge \kappa t) \ge Q \, D^{(1)}_I(\kappa)\,  t^{-(\gamma-1)} \ (1+o(1)),
\]
as required.

For the upper bound, the estimate \eqref{UpperLD} with $\vro=\kappa t$ yields
\[
    \P( Y_{Q,\gamma}(t)\ge \kappa t) 
    \le \left( \frac{D_2 h^{2-\gamma}}{\delta^2 \kappa^\gamma} + Q \, D^{(1)}_I((1-\delta)\kappa)\right)   t^{-(\gamma-1)}    \ (1+o(1)).
\]
Letting first $h\searrow 0$, we get rid of the first term and obtain
\[
    \P( Y_{Q,\gamma}(t)\ge \kappa t) 
    \le Q \, D^{(1)}_I((1-\delta)\kappa)\,  t^{-(\gamma-1)}    \ (1+o(1)).
\]
Letting $\delta\searrow 0$,  we have
\[
    \lim_{\delta\searrow 0} D^{(1)}_I((1-\delta)\kappa)=   D^{(1)}_I(\kappa).   
\]
Therefore,
\[
    \P( Y_{Q,\gamma}(t)\ge \kappa t) \le Q \, D^{(1)}_I(\kappa)\,  t^{-(\gamma-1)} \ (1+o(1)),
\]
as required.
\hfill $\Box$
%%%%%%%%%%%%%%%%%%%%%%%%%%%%%%%%

\subsection{Proof of Theorem \ref{t:intermed2}} 
\label{ss:proof_thm_intermed2}

In the setting of this theorem, the large deviation probabilities decay faster with $t$ than Chebyshev inequality  \eqref{Cheb} suggests. Therefore, we need a finer estimate for $Y^\circ(t)$ given in the following lemma.

\begin{lem} \label {l:expest}
	For every $m,\delta > 0$ there exist $h>0$ and $C=C(h,\delta)>0$ such that for all $t>0$
\[
			\P(Y^\circ(t)\ge \delta t) \le C \, t^{-(\gamma-1)m},
\]
where $Y^\circ(t)$ is defined by $\eqref{split}$ with the splitting point $v_0:=ht$.
\end{lem}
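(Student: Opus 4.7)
The idea is to exploit the fact that $Y^\circ(t)$ is a centered Poisson integral whose integrand $v\mapsto v$ is uniformly bounded by $v_0=ht$. This boundedness permits applying an exponential Markov (Chernoff) inequality with a tilt $\lambda$ that is allowed to grow with $t$, converting the $e^{-\lambda\delta t}$ factor into an arbitrary polynomial decay — something that is impossible on the full half-line $(0,\infty)$ because of the heavy tail of $\mulr_t$.

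Concretely, the standard log-Laplace formula for a centered Poisson integral gives, for $\lambda>0$,
\[
\log\E e^{\lambda Y^\circ(t)} = Q\int_0^{v_0}(e^{\lambda v}-1-\lambda v)\,\mulr_t(dv).
\]
Setting $\alpha:=\lambda v_0$, I would use monotonicity of $x\mapsto (e^x-1-x)/x^2$ on $(0,\alpha]$ to estimate $(e^{\lambda v}-1-\lambda v)\le \frac{e^\alpha-1-\alpha}{\alpha^2}(\lambda v)^2$ for $v\in[0,v_0]$, so the integral is dominated by $\frac{e^\alpha-1-\alpha}{\alpha^2}\,\lambda^2\,Q^{-1}\Var Y^\circ(t)$. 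Plugging in the variance bound \eqref{varb} and the relation $\lambda=\alpha/(ht)$, cancellations yield
\[
\log\E e^{\lambda Y^\circ(t)} \le (e^\alpha-1-\alpha)\,D_2\,h^{-\gamma}\,t^{1-\gamma}.
\]
Combining with the Chernoff bound $\P(Y^\circ(t)\ge\delta t)\le e^{-\lambda\delta t}\E e^{\lambda Y^\circ(t)}$ and rewriting $\lambda\delta t=\alpha\delta/h$ gives
\[
\P\bigl(Y^\circ(t)\ge\delta t\bigr)\le \exp\!\left(-\frac{\alpha\delta}{h}+(e^\alpha-1-\alpha)\,D_2\,h^{-\gamma}\,t^{1-\gamma}\right).
\]

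The last step is to optimize $\alpha$ and $h$. I would pick $\alpha=\alpha(t):=\frac{(\gamma-1)mh}{\delta}\log t$, which makes the first term in the exponent exactly $-(\gamma-1)m\log t$, i.e.\ the desired decay. This choice gives $e^\alpha=t^{(\gamma-1)mh/\delta}$, so the second term is of order $t^{(\gamma-1)mh/\delta+1-\gamma}$. Taking $h$ small, for instance $h:=\delta/(2m)$, reduces the exponent to $(1-\gamma)/2<0$, so the second term is bounded as $t\to\infty$ and yields a constant multiplier. Hence $\P(Y^\circ(t)\ge\delta t)\le C'\,t^{-(\gamma-1)m}$ for all large $t$; for bounded $t$ the bound is trivial by enlarging $C$, since the left-hand side is at most one.

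The main obstacle is the three-way tension between the tilt $\alpha$, the splitting scale $h$, and the target exponent $m$: $\alpha$ must grow like $\log t$ to manufacture the polynomial rate, but doing so inflates the variance contribution through $e^\alpha-1-\alpha$. The saving factor $t^{1-\gamma}$ — inherited from the fact that $\Var Y^\circ(t)\lesssim tv_0^{2-\gamma}=h^{2-\gamma}t^{3-\gamma}$ — just manages to absorb that inflation, but only if $h$ is taken proportional to $\delta/m$. It is this delicate cancellation, relying on the heavy-tail exponent $\gamma<2$ together with the uniform boundedness of the integrand on $[0,v_0]$, that lets us upgrade the quadratic Chebyshev bound \eqref{Cheb} to an arbitrarily fast polynomial rate.
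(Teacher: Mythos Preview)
Your proof is correct and follows essentially the same route as the paper: an exponential Markov (Chernoff) bound for the truncated centered Poisson integral $Y^\circ(t)$, exploiting that the integrand is bounded by $v_0=ht$, together with the choice $h<\delta/m$. The only cosmetic differences are that the paper bounds the log-MGF by splitting $[0,v_0]$ into two halves and using $e^x-1-x\le x^2e^x$ (instead of your single Bernstein-type bound via the monotonicity of $(e^x-1-x)/x^2$), and then optimizes over $\lambda$ to obtain a Bennett-type inequality (instead of plugging in your specific $\alpha\asymp\log t$).
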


\begin{proof}[ of the Lemma]
We start with some calculations valid for arbitrary $v_0$. We have the following formula for exponential moment of the centered Poisson integral:
\be \label{expmom}
	\E \exp (\lambda Y^\circ(t)) = \exp \big\{ \int_0^{v_0} (e^{\lambda v} - 1 - \lambda v) \mulr_t(dv) \big\}.
\ee
Let us split the integration domain in \eqref{expmom} into two  parts: $[0,v_0/2]$ and $(v_0/2,v_0]$.  
For the second one we have
\be \label{eq:int_zone2}
	\int_{\frac{v_0}{2}}^{v_0} (e^{\lambda v} - 1 - \lambda v) \mulr_t(dv) 
	\le e^{\lambda v_0} \cdot \mulr_t[\frac{v_0}{2},v_0]
	\le D_3 e^{\lambda v_0} t v_0^{-\gamma},
\ee
where $D_3 := \frac{2^\gamma \, \E (R^\gamma)}{\gamma(\gamma-1)}$. At the last step we used 
\eqref{mulr_b}.

For the first zone, by using inequality $e^x - 1 - x \le x^2 e^x$ and \eqref{mulr_b} we have
\begin{eqnarray*}
 &&	\int_0^{\frac{v_0}{2}} (e^{\lambda v} - 1 - \lambda v) \mulr_t(dv) 
	\le \int_0^{\frac{v_0}{2}} \lambda^2 v^2 e^{\lambda v} \mulr_t(dv)
\\
	&\le& 2 \, e^{\lambda v_0/2} \lambda^2 \int_0^{\frac{v_0}{2}} v \mulr_t[v,v_0/2] dv
	\le    \frac{2 \, \E (R^\gamma)}{\gamma(\gamma-1)}\,  e^{\lambda v_0/2} \lambda^2  t \int_0^{\frac{v_0}{2}} v^{1-\gamma} dv
\\	
	&=& D_4\, e^{\lambda v_0/2} \, \lambda^2\, t\, v_0^{2-\gamma},
\end{eqnarray*}
where $D_4 := \tfrac{2^{\gamma-1} \, \E (R^\gamma)}{\gamma(\gamma-1)(2-\gamma)}$. 

Next, using inequality $e^{\frac{x}{2}}x^2 \le 3 e^x$, we have
\be \label{eq:int_zone1}
	\int_0^{\frac{v_0}{2}} (e^{\lambda v} - 1 - \lambda v) \mulr_t(dv) 
	\le 3\, D_4\, e^{\lambda v_0}\, t\, v_0^{-\gamma}.
\ee
By summing up \eqref{eq:int_zone2} and  \eqref{eq:int_zone1}, we obtain
\[
   	\E \exp (\lambda Y^\circ(t)) \le \exp\left\{ 
   	    \left( D_3+ 3\, D_4\right) t\, v_0^{-\gamma} e^{\lambda v_0} \right\}
   	:= \exp\left\{  A e^{\lambda v_0} \right\},
\]
where
\[
   A := (D_3+3D_4)tv_0^{-\gamma}. 
\]
For every real $y$ by exponential Chebyshev inequality we have
\be  \label{expCheb}
	\P(Y^\circ(t)\ge y) 
	\le \inf\limits_{\lambda > 0} \ 
	 \exp(A e^{\lambda v_0} -  \lambda y).
\ee

If $y>Av_0$, the minimum on the right hand side is attained at the point 
$\lambda = \frac{1}{v_0} \log(\frac{y}{Av_0})$. 
By plugging this value in \eqref{expCheb} we obtain
\begin{eqnarray} \nonumber 
	\P(Y^\circ(t)\ge y) &\le& 
    \exp\left(\frac{y}{v_0}\right) \ \left(\frac{Av_0}{y}\right)^{\frac{y}{v_0}} 
\\ \label{expCheb2}
  &=& \exp\left(\frac{y}{v_0}\right) \ \left(   (D_3+3D_4)\,\frac{t v_0^{1-\gamma}}{y}\right)^{\frac{y}{v_0}}. 
\end{eqnarray}
Letting here $y:=\delta t$, $v_0:=ht$ yields
\[
   	\P(Y^\circ(t)\ge \delta t)  \le
   		 C \, t^{-\frac{\delta}{h}(\gamma-1)},
\]
where $C$ depends only on $\delta, h$.
Choosing $h < \frac{\delta}{m}$ we get the result.
\end{proof}
\medskip

Now we can proceed to the proof of the theorem.
\medskip

{\bf Upper bound.} Let $\eta:=\tfrac{(1-\zeta)\kappa}{n-\zeta}$.
Since $\zeta\in (0,1)$, we have $\eta>0$. It also follows from the
definition that $\tfrac{\kappa}{n-\zeta}=\tfrac{\kappa-\eta}{n-1}$.
Therefore, we may rewrite \eqref{choosezeta} as
\be \label{chooseeta}
  \P\left(R\ge \frac{\kappa-\eta}{n-1}\right) =0.
\ee
Let $\delta\in(0,\eta)$. By using Lemma \ref{l:expest} with $m=n+1$ we find a small $h>0$ such that 
\be \label{chooseh}
  \P(Y^\circ(t)\ge \delta t) \le C\, t^{-(\gamma-1)(n+1)}.
\ee
By using the decomposition \eqref{split} with  $v_0=ht$ and taking into account $E_t\ge 0$ we get the bound
\[
   \P\left(Y_{Q,\gamma}(t)\ge \kappa t\right)
   \le 
    \P\left(Y^\circ(t)\ge \delta t\right)
    + \P\left(Y^{\dag}(t)\ge (\kappa-\delta) t\right).
\]
The first term is negligible by \eqref{chooseh}. 
Let denote $N_0:=N[v_0,\infty)$, 
which is a Poissonian random variable with intensity
$\mu_0:=Q \mulr_t[v_0,\infty)$,
and apply the following bound to the second term:
\begin{eqnarray*} 
  && \P\left(Y^{\dag}(t)\ge (\kappa-\delta) t\right)
  \le \P(N_0>n)
\\
  &&  + \
      \P\left(Y^{\dag}(t)\ge (\kappa-\delta) t\right); N_0=n)+
      \P\left(Y^{\dag}(t)\ge (\kappa-\delta) t\right), N_0\le n-1).
\end{eqnarray*}

For the first term, an elementary bound for Poisson tail
works, namely
\[
 \P(N_0>n)=e^{-\mu_0}\sum_{m=0}^\infty \frac{\mu_0^{n+1+m}}{(n+1+m)!}
 \le e^{-\mu_0} \frac{\mu_0^{n+1}}{(n+1)!}
 \sum_{m=0}^\infty \frac{\mu_0^{m}}{m!}
 \le \frac{\mu_0^{n+1}}{(n+1)!}
\]
where we used that $(n+1+m)!\ge (n+1)! m!$.
Notice that by \eqref{mulr_b} with $\rho:=v_0=ht$ we have
\[
   \mu_0 \le  \frac{Q \,\E(R^\gamma)} {\gamma(\gamma-1)} \, t\, (ht)^{-\gamma}
   = \frac{Q\, \E(R^\gamma)h^{-\gamma} } {\gamma(\gamma-1)} \, t^{-(\gamma-1)},
\]
hence,
\[
  \P(N_0>n) = O\left( t^{-(\gamma-1)(n+1)}\right)
\]
is negligible.

Further, by using \eqref{chooseeta} and the definition of the measure $\mulr_t$ we see that
\[
   \mulr_t[\tfrac{(\kappa-\delta)t}{n-1},\infty)
   \le  \mulr_t[\tfrac{(\kappa-\eta)t}{n-1},\infty)
   =0,
\]   
which implies
\[ 
    \P\left(Y^{\dag}(t)\ge (\kappa-\delta) t\right),   N_0\le n-1)=0,
\]
because here Poissonian integral $Y^{\dag}(t)$ is a sum of not more than $n-1$ terms each being strictly smaller than $\tfrac{(\kappa-\delta)t}{n-1}$.

For $A \in \BB ([v_0, \infty))$ denote $N_A:=N(A)$ with intensity $\mu_A := Q \mulr_t(A)$ 
	and $\nu_t^{(l,r)}(A) := \P(N_A = 1 \ | \ N_0 = 1)$, which is a measure on $[v_0, \infty)$. We have 
	\[
	\nu_t^{(l,r)}(A) = e^{-\mu_A} \mu_A \cdot e^{\mu_A - \mu_0} \cdot \frac {e^{\mu_0}} {\mu_0} = \frac{\mu_A}{\mu_0}. 
	\]
	\\
	The remaining Poissonian integral with fixed number of points admits the following representation
	\begin{eqnarray*}
		&& \P\left(Y^{\dag}(t)\ge (\kappa-\delta) t\right); N_0=n)
		\\
		&=& \P\left(Y^{\dag}(t)\ge (\kappa-\delta) t\right) \ | \  N_0=n)
		\ \P(N_0) = n
		\\
		&=& e^{-\mu_0} \ \frac{\mu_0^n}{n!} \int_{[v_0,\infty)^n} \ed{v_1+...+v_n\ge (\kappa-\delta)t} \prod_{m=1}^n \nu_t^{(l,r)}(dv_m)
		\\
		&\le& e^{-\mu_0} \ \frac{Q^n}{n!} \int_{\R_+^n} \ed{v_1+...+v_n\ge (\kappa-\delta)t} \prod_{m=1}^n \mulr_t(dv_m)
		\\   
		&=& e^{-\mu_0} \ \frac{Q^n}{n!} \int_{[0,t]^n}\int_{R_+^n} \ed{\ell_1 r_1+...+\ell_n r_n\ge (\kappa-\delta)t} \prod_{m=1}^n \mul_t(d\ell_m)
		\prod_{m=1}^n F_R(dr_m)
		\\   
		&=& e^{-\mu_0} \ \frac{Q^n}{n!} \int_{[0,1]^n}
		\P\left(s_1 R_1+...+s_n R_n\ge \kappa-\delta\right) \prod_{m=1}^n \nu(ds_m)\ t^{-(\gamma-1)n},
		\\
		&:=& e^{-\mu_0} \ Q^n D_I^{(n)}(\kappa-\delta) \ t^{-(\gamma-1)n},
	\end{eqnarray*}
where $R_1, ..., R_n$ are i.i.d. variables with distribution $F_R$ and, according to \eqref{mul}, $\nu$ is a measure on $[0,1]$ having the weight 
$\tfrac{1}{\gamma(\gamma-1)}$ at $1$ and the density
\[
    \frac{d\nu}{ds}\, (s)= s^{-(\gamma+1)}+\frac{2-\gamma}{\gamma}\, s^{-\gamma}, \qquad 0<s<1.
\]

Notice also that the constant  $D^{(n)}_I(\kappa-\delta)$ is finite although measure $\nu$ is infinite at each neighborhood of zero. The reason is that the probability we integrate vanishes if for some $m$ one has
$s_m<s_*:=\tfrac{(n-1)(\eta-\delta)}{\kappa-\eta}$ where $\eta>0$ satisfies \eqref{chooseeta}. Indeed, we have in this case
\begin{eqnarray*}
   \P\left(s_1 R_1+...+s_n R_n\ge \kappa-\delta \right)
   &\le&  \P\left((s_*+(n-1)) \max_{1\le m\le n} R_j\ge \kappa-\delta \right)
\\
    &\le& n \, \P\left( R\ge \frac{\kappa-\delta}{s_*+(n-1)}\right)
\\     &=& n\, \P\left( R\ge \frac{\kappa-\eta}{n-1}\right)
     = 0.
\end{eqnarray*}

We summarize our findings as
\[
    \P\left(Y_{Q,\gamma}(t)\ge \kappa t\right)
    \le Q^n D_I^{(n)}(\kappa-\delta) \ t^{-(\gamma-1)n}
    (1+o(1)).
\]

Letting $\delta\searrow 0$, we obtain
\[
  \P\left(Y_{Q,\gamma}(t)\ge \kappa t\right)
 \le Q^n\, D^{(n)}_I(\kappa) \ t^{-(\gamma-1)n}
 (1+o(1)),
\]
 where
 \begin{eqnarray} \nonumber
    D^{(n)}_I(\kappa)
    &:=& \lim_{\delta\to 0} D^{(n)}_I(\kappa-\delta)
\\ \label{DIn}
  &=&\frac{1}{n!} \int_{[0,1]^n}
  \P\left(s_1 R_1+...+s_n R_n\ge \kappa\right) \prod_{m=1}^n \nu(ds_m).
 \end{eqnarray}
 
It is easy to see that for $n=1$ we obtain the same value of $ D^{(1)}_I(\kappa)$ as in Theorem \ref{t:intermed}.
\bigskip

{\bf Lower bound.} 

First, notice that $E_t$ in \eqref{split} is still negligible because by \eqref{Ebound} we have
\[ 
  E_t \le D_1\, t \, v_0^{1-\gamma} =D_1 \, t\, (ht)^{1-\gamma}
  = O(t^{2-\gamma}) =o(t).
\] 
Hence, for every fixed small $\delta$ we may and do assume that $E_t\le \delta t$ for large $t$.

Second, using \eqref{varb}, by Chebyshev inequality we have
\begin{eqnarray} \nonumber
   \P( |Y^\circ(t)|\ge \delta\, t) &\le& \frac{\Var Y^\circ(t)}{(\delta\, t)^2}
   \le \frac{ D_2 t (h t)^{2-\gamma}}{(\delta\, t)^2}
\\   \nonumber
   &=&  \frac{ D_2 h^{2-\gamma}}{\delta^2} \ t^{-(\gamma-1)} \to 0, \quad\textrm{as } t\to\infty.
\end{eqnarray}
Therefore, we may proceed towards the required lower bound as follows:
 \begin{eqnarray*}
  && \P( Y_{Q,\gamma}(t)\ge \kappa\,t) \ge 
  \P(  |Y^\circ(t)|\le \delta\,t, Y^{\dag}(t)\ge (\kappa+2\delta)t)
 \\
  &\ge& \P(  |Y^\circ(t)|\le \delta\, t) \ 
   \P( Y^{\dag}(t)\ge (\kappa+2\delta) t; N_0=n)
 \\
   &=&  (1+o(1)) \ 
   \P( Y^{\dag}(t)\ge (\kappa+2\delta) t; N_0=n).
 \end{eqnarray*}
 The idea behind this bound is to focus on $n$ service process providing a substantial large deviation
 workload and to suppress other contributions.
 
 Furthermore, by using the expression obtained while working
 on the upper bound
\begin{eqnarray*}
   && \P\left(Y^{\dag}(t)\ge (\kappa+2\delta) t\right); N_0=n)
\\   
   &=& \frac{Q^n}{n!} \int_{[0,1]^n}
  \P\left(s_1 R_1+...+s_n R_n\ge \kappa+2\delta\right) \prod_{m=1}^n \nu(ds_m)\ t^{-(\gamma-1)n} (1+o(1))
\\
  &:=& Q^n \, D_I^{(n)}(\kappa+2\delta)t^{-(\gamma-1)n} (1+o(1)).
\end{eqnarray*}
By letting $\delta\searrow 0$, we obtain
\begin{eqnarray*}
   \lim_{\delta\searrow 0} D_I^{(n)}(\kappa+2\delta)
   &=&  \frac{1}{n!} \int_{[0,1]^n}
  \P\left(s_1 R_1+...+s_n R_n> \kappa\right) \prod_{m=1}^n \nu(ds_m)
\\
   &=&  \frac{1}{n!} \int_{[0,1]^n}
  \P\left(s_1 R_1+...+s_n R_n\ge \kappa\right) \prod_{m=1}^n \nu(ds_m)
  \\ 
    &=&  D_I^{(n)}(\kappa).
\end{eqnarray*}
For the  non-obvious passage we have used the following lemma.

\begin{lem} \label{l:nunzero}
    Assume that \eqref{pkappa0n} holds. Then
\be \label{nunzero}
   \nu^n\left\{ \ss:= (s_1,...,s_n): \P(s_1R_1+...+s_nR_n=\kappa)>0\right\} =0.
\ee    
\end{lem}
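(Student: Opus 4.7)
\textbf{Proof plan for Lemma \ref{l:nunzero}.} The idea is to stratify $[0,1]^n$ according to which coordinates equal the atom $s=1$ of $\nu$, and to control the set $A := \{\mathbf{s} : \P(s_1 R_1 + \cdots + s_n R_n = \kappa) > 0\}$ stratum by stratum. For each $I \subseteq \{1,\ldots,n\}$, set
\[
B_I := \{\mathbf{s} \in [0,1]^n : s_i = 1 \text{ for } i \in I \text{ and } s_i \in (0,1) \text{ for } i \notin I\}.
\]
Since $\nu(\{0\})=0$ (the density blows up at $0$ but there is no atom), the family $\{B_I\}_I$ carries the full $\nu^n$-mass of $[0,1]^n$, so it suffices to prove $\nu^n(A \cap B_I)=0$ for every $I$.

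For the top stratum $I=\{1,\ldots,n\}$, $B_I$ reduces to the single point $(1,\ldots,1)$, at which the weighted sum becomes $R_1+\cdots+R_n$; hypothesis \eqref{pkappa0n} says precisely that this variable has no atom at $\kappa$, so $(1,\ldots,1)\notin A$. This is the only place where the hypothesis enters, and the argument visibly collapses without it.

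For a proper subset $I\subsetneq\{1,\ldots,n\}$, pick any $j\notin I$ and plan to integrate $s_j$ last in Fubini. Freezing the remaining coordinates, set $T:=\sum_{i\ne j}s_iR_i$; its distribution has an at most countable atom set $B_T$ and the law of $R$ has a countable atom set $A_R\subset(0,\infty)$. Independence of $R_j$ and $T$ then gives
\[
\P(s_j R_j+T=\kappa)=\sum_{a\in A_R,\,b\in B_T}\P(R_j=a)\,\P(T=b)\,\mathbf{1}\{s_j a+b=\kappa\},
\]
which is positive only when $s_j$ lies in the countable set $\{(\kappa-b)/a:a\in A_R,\,b\in B_T\}$. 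Since $\nu$ restricted to $(0,1)$ is absolutely continuous with respect to Lebesgue, every countable subset is $\nu$-null; hence the $s_j$-slice of $A\cap B_I$ is $\nu$-null for every fixed value of the other coordinates, so Fubini yields $\nu^n(A\cap B_I)=0$.

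The only delicate point is the Fubini step: $\nu$ has infinite mass near $0$, so the outer integral over $(s_i)_{i\ne j}$ could in principle be infinite. This is harmless because $\nu$ is $\sigma$-finite (so the product measure is well defined) and the inner integral vanishes identically rather than merely being finite; the total integral over $B_I$ is therefore zero regardless of the outer mass. Summing the contributions of the $2^n$ strata produces \eqref{nunzero}.
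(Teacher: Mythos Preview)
Your proof is correct and follows essentially the same approach as the paper: both stratify $[0,1]^n$ according to which coordinates equal the atom $1$ of $\nu$, dispose of the all-ones point via \eqref{pkappa0n}, and handle every other stratum using absolute continuity of $\nu$ on $[0,1)$ together with countability of atoms of $F_R$. The only cosmetic difference is that the paper first covers $A$ by the countable union of hyperplanes $\{s:s_1r_1+\cdots+s_nr_n=\kappa\}$ indexed by atom tuples $(r_1,\dots,r_n)$ of $F_R$ and then stratifies each hyperplane, whereas you stratify first and run a one-coordinate Fubini argument; the two executions are equivalent.
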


The required lower bound
\[ 
    \P( Y_{Q,\gamma}(t)\ge \kappa\,t) \ge
    Q^n  D^{(n)}_I(\kappa)\,  t^{-(\gamma-1)n} (1+o(1))
\]
follows now from the previous estimates.
It  merely remains to prove the lemma.
\hfill $\Box$
\medskip

\begin{proof}[ of Lemma \ref{l:nunzero} ]
Let $r_1,...,r_n$ be the atoms of the distribution $F_R$, i.e. $\P(r_m)>0, 1\le m\le n$.
Define
\[
   F=F(r_1,...,r_n):=\{ \ss\in [0,1]^n: s_1r_1+...+s_nr_n=\kappa \}.
\]

For every subset of integers $M\subset[1..n]$ let
\[
   B_M:=\{ \ss\in [0,1]^n: s_m\in [0,1), m\in M; s_m=1, m\not \in M \}.
\]
Notice that $[0,1]^n=\cup_{M} B_M$.
Let 
\[
  F_M:=F\bigcap B_m= \{\ss\in B_M: \sum_{m\in M} s_m r_m = \kappa- \sum_{m\not\in M} r_m\}.
\]
If $M$ is not empty, then $\nu^n(F_M)=0$ because $\nu$ is absolutely continuous on $[0,1)$.

If $M$ is empty, then $B_M=\{(1,...,1)\}$ is a singleton and $F_M=\emptyset$ because otherwise $\sum_{m=1}^n r_m=\kappa$ which would contradict to \eqref{pkappa0n}.

We conclude that 
\[
   \nu^n\left( F(r_1,...,r_n)\right)= \sum_M \nu^n(F_M) =0.
\]

Since 
\[
  \left\{ \ss: \P(s_1R_1+...+s_nR_n=\kappa)>0\right\} 
  \subset \bigcup_{r_1,...,r_n}  F(r_1,...,r_n)
\]
and the union is countable, we obtain \eqref{nunzero}.

\end{proof}

\subsection{Proof of Theorem \ref{t:ultra_regular}}

{\bf Upper bound.}

 We take a small $\delta>0$, use decomposition \eqref{split} with $v_0:= h \vro$ (a small $h=h(\delta)$ will be specified later on), and start with a usual bound
\begin{eqnarray*}
  && \P\left(Y_{Q,\gamma}(t)\ge \vro\right)
   \le 
    \P\left(Y^\circ(t)\ge \delta \vro \right)
    + \P\left(Y^{\dag}(t)\ge (1-\delta) \vro \right)
    \\
     &\le& 
     \P\left(Y^\circ(t)\ge \delta \vro \right)
    + \P\left(Y^{\dag}(t)\ge (1-\delta) \vro ; N_0=1\right)
    +P(N_0\ge 2).
\end{eqnarray*}
To show that the first term is negligible we use estimate
\eqref{expCheb2} with $y:=\delta\vro$, $v_0:=h\vro$ and obtain for some $C=C(\delta,h)$
\[
     \P\left(Y^\circ(t)\ge \delta \vro \right)
     \le C \left(t\vro^{-\gamma}\right)^{\frac{\delta}{h}}
     \le  C \vro^{-(\gamma-1)\,\frac{\delta}{h}}
     \ll \tpf(\vro)\le \tpf(\vro/t)
\]
whenever $h$ is chosen so small that 
$(\gamma-1)\,\frac{\delta}{h}>m$.
 
Subsequent evaluation of $Y^\dag(t)$ requires analysis of the measure $\mulr_t$. By using \eqref{mulr_intrepr} and \eqref{mul} we obtain
\begin{eqnarray*}
 \mulr_t[v,\infty) &=& \int_{v/t}^\infty \mul_t\left[\frac vr,t\right] F_R(dr)
 \\
  &=& \int_{v/t}^\infty 
  \left(  \frac{t\,(r/v)^{\gamma}}{\gamma} +\frac{2-\gamma}{(\gamma-1)\gamma} \, (r/v)^{\gamma-1}
  \right)F_R(dr)
  \\
   &=&  \frac{t\,v^{-\gamma}}{\gamma} 
   \int_{v/t}^\infty r^\gamma F_R(dr)
   +\frac{(2-\gamma)v^{1-\gamma}}{(\gamma-1)\gamma} 
   \int_{v/t}^\infty r^{\gamma-1}  F_R(dr).
\end{eqnarray*}
Since the tail of $F_R$ is regularly varying, we have the following asymptotics for the integrals
\begin{eqnarray*}
   \int_{z}^\infty r^\gamma F_R(dr) &=& \frac{m z^\gamma}{m-\gamma}\,\tpf(z)\, (1+o(1)),
\\
  \int_{z}^\infty r^{\gamma-1} F_R(dr) &=& \frac{m z^{\gamma-1}}{m-\gamma+1}\,\tpf(z) \, (1+o(1)),
  \quad \textrm{as } z\to \infty.
\end{eqnarray*}
Therefore, we obtain
\begin{eqnarray} \nonumber
   \mulr_t[v,\infty) &=& t^{-(\gamma-1) }
 \left [
      \frac{m}{\gamma(m-\gamma)}  
   +
   \frac{(2-\gamma)m}{(\gamma-1)\gamma(m-\gamma+1)} 
   \right]
 \tpf(v/t) \, (1+o(1))
\\  \nonumber
  &=&  
  \frac{m(m-1)}{\gamma(\gamma-1)(m-\gamma+1)(m-\gamma)}
  \, t^{-(\gamma-1)} \tpf(v/t) \, (1+o(1)).
\\   \label{mulr_asymp_ultra}
  &=& 
  D \, t^{-(\gamma-1)} \tpf(v/t) \, (1+o(1)),
  \qquad \textrm{as }  v\gg t.
\end{eqnarray}
Now the evaluation of $Y^\dag$ is straightforward. Indeed,
by \eqref{mulr_asymp_ultra}
\begin{eqnarray*}
 &&  \P\left(Y^{\dag}(t)\ge (1-\delta) \vro ; N_0=1\right)
   \le Q \, \mulr_t[(1-\delta) \vro, \infty)
\\
   &=&  Q\, D \, t^{\gamma-1} \tpf((1-\delta)\vro/t) \, (1+o(1))
\\
   &=&  Q\, D \, t^{\gamma-1} \tpf(\vro/t) (1-\delta)^{-m}\, (1+o(1))
\end{eqnarray*}
and
\begin{eqnarray*}
    &&  P(N_0\ge 2)  \le Q^2 \, \mulr_t[h\vro,\infty)^2
\\
    &=& Q^2\, (D t^{-(\gamma-1)} \tpf(h\vro/t))^2 (1+o(1))
\\
    &=&  Q^2 \, (D t^{-(\gamma-1)} \tpf(\vro/t) h^{-m})^2 (1+o(1))
   \ll t^{-(\gamma-1)} \tpf(\vro/t). 
\end{eqnarray*}

By combining these estimates and letting $\delta\to 0$ we obtain the desired bound
\[
    \P\left(Y_{Q,\gamma}(t)\ge \vro\right)
  \le Q\,  D \, t^{-(\gamma-1)} \tpf(\vro/t) \, (1+o(1)).
\]
\medskip

 {\bf Lower bound.} 
 
 Since  $\vro\gg t \gg t^{1/\gamma}$, all bounds from section
 \ref{ss:LowerLD} apply. For every  $\delta>0$ inequality \eqref{LowerLD} along with \eqref{mulr_asymp_ultra} yield
 \begin{eqnarray*}
   \P\left(Y_{Q,\gamma}(t)\ge \vro\right)
   &\ge&   Q \, \mulr_t[(1+2\delta)\vro,\infty) (1+ o(1))
\\  
  &=& Q \, D \, t^{-(\gamma-1)} (1+2\delta)^{-m} \tpf(\vro/t) \, (1+o(1)),
 \end{eqnarray*}
 and letting $\delta\to 0$ we get the desired bound
 \[
    \P\left(Y_{Q,\gamma}(t)\ge \vro\right)
    \ge Q\, D \, t^{-(\gamma-1)} \tpf(\vro/t) \, (1+o(1)).
\]
 \hfill $\Box$

%%%%%%%%%%%%

\section*{Acknowledgement.} 
This work was supported by Russian Science Foundation grant 21-11-00047.

%%%%%%%%%%%%%%%%%%%%%%%%%%%%%%%%%%%%%%%%%%%%%%%%%%%%%%%%%%%%%%%%%%%%%%%%%%%%%%%%%%%%%%%%%%%%%
%%%%%%%%%%%%%%%%%%%%%%%%%%%%%%%%%%%%%%%%%%%%%%%%%%%%%%%%%%%%%%%%%%%%%%%%%%%%%%%%%%%%%%%%%%%%%

\end{document}